\newtheorem{theorem}{Theorem}[section]
\newtheorem{lemma}[theorem]{Lemma}
\newtheorem{proposition}[theorem]{Proposition}
\newtheorem{corollary}[theorem]{Corollary}
\newtheorem{assumption}[theorem]{Assumption}
\newtheorem{remark}[theorem]{Remark}
\newcommand{\A}{\mathcal{A}}
\newcommand{\D}{\mathcal{D}}
\newcommand{\cH}{\mathcal{H}}
\newcommand{\R}{\mathcal{R}}
\newcommand{\Tn}{T_n}
\newcommand{\Tnn}{T_{n+1}}
\newcommand{\Ahalf}{\mathcal{A}^{1/2}} 
\begin{document}


\title{Moore-Gibson-Thompson equation with memory, part II: general decay of energy}



\author{Irena Lasiecka and Xiaojun Wang*}


\begin{abstract}
We study a temporally third order (Moore-Gibson-Thompson) equation with a memory term. Previously it is known that, in non-critical regime, the global solutions exist and the energy functionals decay to zero. More precisely, it is known that the energy has exponential decay if the memory kernel decays exponentially. The current work is a generalization of the previous one (Part I) in that it allows the memory kernel to be more general and shows that the energy decays the same way as the memory kernel does, exponentially or not.
\end{abstract}



\maketitle

\section{Introduction}
We study the energy decay of Moore-Gibson-Thompson(MGT) equation with a viscoelastic term
\begin{equation}\label{mainmgt}
\tau u_{ttt}+ \alpha u_{tt}+c^2\A u+b\A u_t -\int_0^tg(t-s)\A u(s)ds=0,
\end{equation}
with initial data
\begin{equation}
u(0)=u_0, u_t(0)=u_1, u_{tt}(0)=u_2,
\end{equation}
where $\tau, c, b$ are parameters inherited from modeling process, see \cite{KLM11} and references therein. The constant $\alpha$ can be scaled out; we keep it, however, for notational consistency with \cite{KLM11}. $\A$ is a positive self-adjoint operator defined in a real Hilbert space $H$. The convolution term $\int_0^tg(t-s)\A u(s)ds$ reflects the memory effect of viscoelastic materials; the ``memory kernel" $g(t):[0,\infty)\rightarrow [0,\infty)$ directly relates to whether or how the energy decays.  Without this memory term, it is known the MGT equation has exponential energy decay in the non-critical regime, where $\gamma=\alpha-{c^2\tau\over b}>0$, see \cite{KLM11}. 

In an earlier work \cite{LW15a}, we studied (\ref{mainmgt}) with a nontrivial $g(t)$, but focusing on the case where $g(t)$ has exponential decay. We were able to get exponential decay of the energy for three types of memories in the non-critical regime. Here we study the case where the memory kernel has a more general decay rate. For sake of clarity, in this work we restrict our attention to one of the three types of memories introduced in \cite{LW15a}. 

{\it Notations:}
\begin{itemize}
\item $g(t)$: memory kernel.
\item $G(t)=\int_0^tg(s)ds$: strength of memory.
\item $H$: real Hilbert space.
\item $||\cdot||$: norms on $H$.
\item $g\circ h\triangleq \int_0^tg(t-s)||h(t)-h(s)||^2ds$, $g(\cdot)\in C(R^+), h(\cdot)\in H$.
\end{itemize}

\subsection{Main results}
 
Our work shows that the energy decay rate of system (\ref{mainmgt}), where memory effects get involved, is determined solely by the memory kernel: if $g(t)$ decays exponentially, then the energy decays exponentially too; if $g(t)$ decays slower, then the energy decays slower as well.
 
\begin{assumption}\label{main_assumption}
Let $G(t)=\int_0^t g(s)ds$. We assume
\begin{enumerate}
\item $g(t)\in C^1(\mathbb{R}_+), g(t)>0, g(0)<{b\alpha\gamma\over \tau^2}$ and $G(+\infty)<c^2$.
\item
There exists a convex function $H(\cdot) \in C^1(\mathbb{R}_+)$, which is strictly increasing with $H(0) = 0$, such that
$$g'(t)+H(g(t)) \leq 0, \forall t > 0.$$
\item
Let  $y(t) $ be  a solution of the following ODE 
$$y' (t) + H(y(t)) =0, y(0)=g(0),$$ and there exists $\alpha_0 \in (0,1)$ such that 
$y^{1-\alpha_0}(\cdot)  \in L_1(\mathbb{R}_+)$.
\item 
There exists $\bar{\delta}>0$
such that 
 $H(\cdot ) \in C^2(0,\bar{\delta})$ and 
$x^2H''(x)-xH'(x)+H(x)\geq 0, \forall x\in [0,\bar{\delta}]$.
\item $\A$ satisfies $||u||\leq \lambda_0||\Ahalf u||$ for all $u\in H$.
\item $\gamma=\alpha-{c^2\tau\over b}>0$.
\end{enumerate}
\end{assumption}

\begin{remark}
Comparing to the result in \cite{LW15a}, here we do not require the convexity of $g$, however $g(0)$ has to be suitably small. 
\end{remark}

\begin{theorem}[Existence of weak solution]\label{main_theorem0}
Consider system (\ref{mainmgt}).
If $$(u_0, u_1, u_2)\in \D(\Ahalf)\times\D(\Ahalf)\times H, $$
then, under the Assumption \ref{main_assumption}, this system has a unique weak solution $u$ satisfying
$$u\in C^1(\mathbb{R}_{+}; D( A^{\frac 12})) \cap C^{2}(\mathbb{R}_{+};H). $$
\end{theorem}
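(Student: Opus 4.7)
The plan is to establish well-posedness via a Faedo--Galerkin scheme combined with a tuned MGT-type energy functional, in the spirit of Part I \cite{LW15a}. Since $\A$ is positive self-adjoint, fix an orthonormal basis $\{w_j\}\subset \D(\A)$ of $H$ and construct finite-dimensional approximants $u_n(t) \in \mathrm{span}\{w_1,\dots,w_n\}$ satisfying the projection of (\ref{mainmgt}) with projected initial data converging to $(u_0,u_1,u_2)$ in $\D(\Ahalf)\times \D(\Ahalf)\times H$. For each fixed $n$ the system reduces to a linear ODE with a continuous Volterra kernel, which admits a unique local solution by Picard iteration; the a priori bound below promotes this to a global solution.

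The heart of the argument is the energy estimate. Introduce an auxiliary variable $z_n$ combining $u_{n,tt}$ and $u_{n,t}$ so as to diagonalize the top-order part of the MGT operator, and work with a modified energy of the schematic form
\begin{align*}
\tilde E_n(t) = \tfrac{\tau}{2}\|z_n\|^2 + \tfrac{b\gamma}{2}\|\Ahalf u_{n,t}\|^2 + \tfrac{c^2 - G(t)}{2}\|\Ahalf u_n\|^2 + \tfrac{1}{2}(g\circ \Ahalf u_n)(t),
\end{align*}
where the splitting $\int_0^t g(t-s)\A u(s)\,ds = G(t)\A u(t) - \int_0^t g(t-s)\A(u(t)-u(s))\,ds$ has been used to expose the $g\circ\Ahalf u$ structure. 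Differentiating in $t$, the MGT part supplies the standard non-critical dissipation ($\gamma>0$), the memory functional contributes $\tfrac{1}{2}(g'\circ \Ahalf u_n) - \tfrac{1}{2}g(t)\|\Ahalf u_n\|^2$, and the remaining cross term is bounded via Cauchy--Schwarz by a constant multiple of $g(0)$ times the coercive energy. The hypotheses $g(0)<b\alpha\gamma/\tau^2$ and $G(\infty)<c^2$ from Assumption \ref{main_assumption}(1) are precisely what is needed to absorb the cross term and keep the coefficient of $\|\Ahalf u_n\|^2$ uniformly positive, yielding a bound $\tilde E_n(t)\le C\tilde E_n(0)$ uniform in $n$ and in $t\ge 0$.

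Weak-$\ast$ compactness then produces $u$ with the asserted regularity $u\in L^\infty_{\mathrm{loc}}(\mathbb{R}_+;\D(\Ahalf))$, $u_t\in L^\infty_{\mathrm{loc}}(\mathbb{R}_+;\D(\Ahalf))$, and $u_{tt}\in L^\infty_{\mathrm{loc}}(\mathbb{R}_+;H)$; passing to the limit in the projected equation is routine because the memory convolution is continuous under weak $L^2_{\mathrm{loc}}$ convergence. The continuities $u\in C^1(\mathbb{R}_+;\D(\Ahalf))\cap C^2(\mathbb{R}_+;H)$ and uniqueness both follow from applying the same energy identity to differences of solutions. The main obstacle is closing the energy inequality without the convexity of $g$ assumed in Part I: the favorable sign that convexity supplied must now be compensated purely by the smallness condition $g(0)<b\alpha\gamma/\tau^2$, and verifying that the resulting constants permit a uniform-in-$t$ bound, rather than only on bounded intervals, is the key quantitative point.
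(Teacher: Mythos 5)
Your proposal is correct and follows essentially the route the paper intends: the paper explicitly omits this proof, stating only that it is completed by the standard Galerkin method with the global energy bound obtained as a by-product of the decay analysis, and your Faedo--Galerkin scheme with the tuned MGT-memory energy (which is, up to the completed-square rewriting, the paper's natural energy $E(t)=E_1(t)+kE_2(t)$ with its $g\circ \Ahalf u$ and $-G(t)\|\Ahalf u\|^2$ corrections) is precisely that argument. The roles you assign to the smallness condition $g(0)<b\alpha\gamma/\tau^2$ and to $G(+\infty)<c^2$ match exactly how the paper uses them, respectively in Lemma \ref{kg_lemma} to keep the coefficient of $\|\Ahalf u_t\|^2$ in the damper $R(t)$ positive and to keep the coefficient of $\|\Ahalf u\|^2$ in the energy coercive.
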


\begin{theorem}[General decay of energy]\label{main_theorem}
Under the Assumption \ref{main_assumption}, the energy of the weak solution decays to zero. Moreover, there exists positive constants $\tilde{C}, \tilde{\beta}, \tilde{\kappa}$ such that $$E(t)\leq \tilde{C}y(\tilde{\beta} t+\tilde{\kappa}).$$
\end{theorem}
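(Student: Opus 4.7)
The plan is to adapt the Lasiecka--Tataru / Alabau-Boussouira framework for general (non-exponential) energy decay to the MGT setting, building on the Lyapunov structure established in Part I. First I would reuse the multipliers from Part I (typically $u_t$, $u_{tt}+\alpha u_t/\tau$, and $u+\tau u_t/\alpha$) to construct a functional $\Psi(t)$ equivalent to the full energy $E(t)$ (which already incorporates the memory piece $\tfrac12 (g\circ \Ahalf u)(t)$) satisfying
$$\Psi'(t)\le -k_1 E(t) + k_2\,(g\circ \Ahalf u)(t).$$
Coercivity of $\Psi$ and the sign structure here are enabled by $G(\infty)<c^2$ and $\gamma>0$ from assumptions (1) and (6). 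The smallness $g(0)<b\alpha\gamma/\tau^2$ is what guarantees that the only leftover non-dissipative term is precisely $(g\circ \Ahalf u)$, rather than something involving $\Ahalf u_t$ or $u_{tt}$ which would be uncontrollable by the memory dissipation.

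The crucial new ingredient, replacing the bound $g'\le -cg$ of Part I, is the convex differential inequality $g'+H(g)\le 0$. I would exploit it via Jensen's inequality applied to $H$ against the measure $d\mu_t(s) = \|\Ahalf u(t)-\Ahalf u(s)\|^2\,ds$ on $[0,t]$, normalized by its total mass $M(t) = \int_0^t d\mu_t$. Using $H(g)\le -g'$, this yields
$$(g\circ \Ahalf u)(t) \le M(t)\,H^{-1}\!\Bigl(\tfrac{-1}{M(t)}(g'\circ \Ahalf u)(t)\Bigr),$$
where $M(t)$ is bounded uniformly thanks to the energy bound on $\Ahalf u$ together with assumption (5), and $-(g'\circ \Ahalf u)$ is essentially the negative time-derivative of the memory contribution to $E$. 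Assumption (4) is the standard technical condition that keeps $r\mapsto r\,H^{-1}(r)$-type rearrangements monotone so that they compose well with further Lyapunov perturbations; assumption (3) is what ensures that the comparison-ODE solution $y$ actually tends to zero and provides the integrability needed at the final stage.

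Combining the two, I would introduce a further perturbation $\widehat\Psi(t) = H'(\epsilon E(t))\,\Psi(t) + \text{correction}$ --- a standard device in this framework --- and after rearrangement obtain a closed inequality
$$\widehat\Psi'(t) + c_0\, H\!\bigl(c_1\,\widehat\Psi(t)\bigr) \le 0.$$
Direct comparison with $y'+H(y)=0$ then produces $E(t)\le \tilde C\,y(\tilde\beta t+\tilde\kappa)$. The hardest part will in fact be the very first step rather than the new Jensen argument: the third-order time structure of MGT produces a non-symmetric energy coupling $u_t$, $u_{tt}$, and $\Ahalf u$, and arranging the multiplier identities so that every residual non-dissipative term is absorbed into $k_2\,(g\circ \Ahalf u)$ (rather than into uncontrollable expressions in the higher time derivatives) is exactly where the smallness $g(0) < b\alpha\gamma/\tau^2$ in assumption (1) is consumed; it plays here the role that the restriction on the exponential rate played in Part I.
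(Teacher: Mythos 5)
Your overall architecture (multiplier estimates producing a dissipation inequality, Jensen's inequality exploiting the convexity of $H$, comparison with the ODE $y'+H(y)=0$) is the right family of ideas, and your reading of how $\gamma>0$, $G(\infty)<c^2$ and the smallness of $g(0)$ are consumed in the first, multiplier stage matches what the paper actually does (there the smallness $g(0)<b\alpha\gamma/\tau^2$ makes the coefficient of $\|\Ahalf u_t\|^2$ in the damper positive). But there is a genuine gap at the heart of your Jensen step. You normalize by $M(t)=\int_0^t\|\Ahalf u(t)-\Ahalf u(s)\|^2\,ds$ and assert that $M(t)$ is uniformly bounded ``thanks to the energy bound on $\Ahalf u$.'' It is not: the a priori energy bound only gives $\|\Ahalf u(t)\|^2\le CE(0)$, so the diagonal contribution $\int_0^t\|\Ahalf u(t)\|^2\,ds = t\,\|\Ahalf u(t)\|^2$ can grow linearly in $t$; bounding it requires $E(t)=O(1/t)$, i.e.\ a decay rate you have not yet proved. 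The argument is circular exactly at this point, and Assumption (5) does not help. This is not a removable technicality --- it is the reason the paper introduces the modified convex functions $H_{1,\alpha_0}(s)=\alpha_0 s^{1-1/\alpha_0}H(s^{1/\alpha_0})$ and applies Jensen with the \emph{weighted} mass $c(\alpha_0,t)=\int_0^t g^{1-\alpha_0}(t-s)\|\Ahalf u(t)-\Ahalf u(s)\|^2\,ds$, which is finite precisely because $y^{1-\alpha_0}\in L^1$ (Assumption (3), which you relegate to ``ensuring $y\to0$''; in fact it is consumed here), while Assumption (4) is consumed to prove convexity of $H_{1,\alpha_0}$ near the origin, not merely to keep rearrangements monotone.

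The consequence is that a single pass cannot deliver $E(t)\le \tilde C y(\tilde\beta t+\tilde\kappa)$: the first pass only yields $E(t)\le C y^{\alpha_0}(\beta_1 t+\kappa_1)$, a strictly weaker rate. The paper then runs a finite bootstrap, using the rate $y^{k\alpha_0}$ from step $k$ to prove finiteness of $c((k+1)\alpha_0,t)$ (via the $\alpha$-sequence proposition in the Appendix), increasing the exponent until $m\alpha_0>1-\alpha_0$; only at that final stage does $\int_0^t E(s)\,ds$ --- and hence your $M(t)=c(1,t)$ --- become uniformly bounded, at which point the unweighted Jensen argument with $H$ itself finally applies and gives the optimal rate. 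A second, smaller issue: the paper does not use a continuous weighted Lyapunov functional of the form $H'(\epsilon E)\Psi$; it discretizes the inequality on intervals $[nT_0,(n+1)T_0]$ and invokes the Lasiecka--Tataru lemma to pass to the comparison ODE. Your continuous variant could plausibly be made to work, but only after the $M(t)$ obstruction above is resolved, i.e.\ only after an iteration of the kind you have omitted.
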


\subsection{Background}

Moore-Gibson-Thompson(MGT) arises from modeling high amplitude sound waves.
There have been quite a few works in this research field due to the wide range of applications such as the medical and industrial use of high intensity ultrasound in lithotripsy, thermotherapy, ultrasound cleaning, etc. The classical nonlinear acoustics models include Kuznetsov's equation, the Westervelt equation and the Kokhlov-Zabolotskaya-Kuznetsov equation. A thorough study of the linearized models is a good starting point for better understanding the well-posedness and asymptotic behaviors of the nonlinear models. Actually, the work \cite{KLM11} has shown, even in the linear case, rich dynamics appear. In \cite{MMT12}, Marchand et.al. 2012 gave a detail analysis of this equation; using the abstract semigroup approach and a refined spectrum analysis they settle the well-posedness of (\ref{mainmgt0}) and identified an accumulation point of eigenvalues which essentially connects to the exponential decay of energy. Kaltenbacher et.al. \cite{KLP12} also studied the fully nonlinear version of MGT equation and gave the well-posedness and the exponential decay.

In \cite{KLM11}, Kaltenbacher, Lasiecka and Marchand studied well-posedness and uniform decays of energy for  the linearized MGT system, 
\begin{eqnarray}\label{mainmgt0}
\tau u_{ttt}+ \alpha u_{tt}+c^2\A u+b\A u_t=0.
\end{eqnarray} A critical parameter $\gamma=\alpha-{c^2\tau\over b}$ was disclosed. It has been shown in \cite{KLM11} that when $\gamma>0$, namely in the non-critical case, the problem is well-posed and its solution is exponentially stable; while $\gamma=0$, the energy is conserved.

It is well known that the {\it wave equation} conserves mechanical energy. More specifically, consider the wave equation
$$u_{tt}-\Delta u=0.$$ Multiply by $u_t$, we have
$${d\over dt}{1\over 2}(||u_t||^2+||\nabla u||^2)=0\Rightarrow E(t)\equiv{1\over 2}(||u_t||^2+||\nabla u||^2)=E(0).$$
Here we use $E(t)$ to represent the mechanical energy, the summation of kinetic and potential energy. So the mechanical energy does not change when a wave evolves. On the other hand, we normally see mechanical energy dissipates in a physical system due to certain damping mechanism.\footnote{Damping is the dissipation of energy, which transform the mechanical energy into another form, e.g. heat or light.} A different viewpoint is, in order to force the energy decay, we have to inject damping mechanisms into the system. In this aspect, there are different ways to implement the idea. One way is to add viscous damping, also called frictional damping, into the wave equation. Namely, taking into account the friction, we end up with equation in form of
$$u_{tt}-\Delta u+u_t=0.$$ 
The energy estimate gives $${d\over dt}{1\over 2}(||u_t||^2+||\nabla u||^2)=-||u_t||^2\leq 0\Rightarrow E(t)\leq E(0).$$
Obviously, the mechanical energy does not increase for sure; in fact it decays, at least when the kinetic $||u_t||^2$ is not zero. Indeed, it can be shown, by standard Lyapunov function method, that the energy decays exponentially \cite{Che79}.
Adding the structure damping $\Delta u_t$ to the wave equation is another way to obtain exponential decay of the energy. These topics are of great importance in real applications and form an active research area \cite{Adh00}.

What interests us here is a different damping mechanism caused by viscoelasticity, which forces the appearance of memory term in a system. Viscoelasticity is  the property of materials that exhibit both viscous and elastic characteristics when undergoing deformation. It usually appears in fluids with complex microstructure, such as polymers, suspensions, and granular materials. One encounters viscoelastic materials in biological science, materials sciences as well as in many industrial processes, e.g., in the chemical, food, and oil industries. The phenomena and mathematical models for such materials are more varied and complex than those of pure elastic materials or those of pure Newtonian fluids, see\cite{Ren00}.

In \cite{LW15a}, we investigate the case where memory effects are incorporated into the model. 
It is well known that memory generates stabilizing mechanism for second order wave equations. There is an abundant literature on the topic, see \cite{ACS08, AC09, Ala10, CCM08, CCLW15, FP02, HW10, LMM13, Mes08, MM12, RS01, XL13}, to name a few. The prototype  model is viscoelastic  wave equation,
\begin{equation}\label{wave_memory}
u_{tt}-\Delta u+\int_0^tg(t-s)\Delta u(s)ds=0,
\end{equation}
where $\Delta$ is Laplacian defined on bounded domain with smooth boundary.

The convolution term $\int_0^tg(t-s)\A u(s)ds$ represents memory: the integral itself suggests the nonlocality in time; the system at present moment ``remember" certain information in the past history. This is a simplified one dimensional wave equation with memory. Like all the physically meaningful partial differential equations, it is derived based on the balance laws of momentum and/or conservation of mass, together with a constitutive equation relating stress to strain (elasticity) or stress to rate of strain (fluids). For more information on the derivation of this 1-D system, we refer to Renardy \cite{HNR87}. On a detailed description on damping mechanisms, see \cite{Adh00} and the references therein. 

In the study of wave equation, an interesting phenomena is that, when two different kind of damping terms appear simultaneously, despite the fact that both terms help the system dissipate the energy, they do not necessarily accelerate the decay process. For instance, while the frictional damping alone causes exponential decay; only polynomial decay can be reached when an additional memory term presents itself with a polynomial kernel\cite{CO03}. 

Natural questions can be raised based on the study of wave-memory system: What kind of term creates damping effect in a system?  How to select multipliers that can ``detect" the damping? Does the damping term always help? Is it possible that adding a damping term ``hurt" the initial energy, in any sense? While the answers to these questions in case of wave with frictional damping are relatively simple, they are not for the memory damping we introduce below. Following the wave-memory system, this work is part of the effort in understanding this damping mechanism. To our best knowledge, this is one of the first few works that study the memory damping in the context of a {\it third order} in time system. 

In the rest of this section, we recall the results for MGT equation. In Section 2 we prove the main results stated before.

\subsection{Results for MGT}

Consider the MGT equation (\ref{mainmgt0}),
$$\tau u_{ttt}+ \alpha u_{tt}+c^2\A u+b\A u_t=0,$$ with initial conditions
$$u(0)=u_0, u_t(0)=u_1, u_{tt}(0)=u_2.$$
Here $\A$ is a self-adjoint positive operator on a Hilbert space $H$ with a dense domain $\D(\A)\subset H$. The initial data $$(u_0, u_1, u_2)\in \cH\equiv\D(\Ahalf)\times\D(\Ahalf)\times H.$$

We state the following results from \cite{KLM11}:

\begin{theorem}[Kaltenbacher et.al. 2011, \cite{KLM11}]\label{KLM11_1}

Let $\tau>0, b>0, \alpha\in \R$, then the system (\ref{mainmgt0}) generates a strongly continuous semigroups on $\cH$. 
\end{theorem}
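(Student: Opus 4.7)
The plan is to recast the third-order-in-time MGT equation as a first-order Cauchy problem on $\mathcal{H}=\D(\Ahalf)\times\D(\Ahalf)\times H$ and then apply the Lumer--Phillips / Hille--Yosida machinery. Setting $U=(u,v,w)^{T}$ with $v=u_t$ and $w=u_{tt}$, equation (\ref{mainmgt0}) rewrites as $U_t=\mathbb{A}U$ where
\begin{equation*}
\mathbb{A}\begin{pmatrix}u\\ v\\ w\end{pmatrix}
=\begin{pmatrix} v\\ w\\ -\tfrac{c^2}{\tau}\A u-\tfrac{b}{\tau}\A v-\tfrac{\alpha}{\tau} w\end{pmatrix},
\end{equation*}
with a natural domain
\begin{equation*}
\D(\mathbb{A})=\bigl\{(u,v,w)\in\mathcal{H}\;:\; v\in\D(\Ahalf),\; w\in\D(\Ahalf),\; c^2 u+b v\in\D(\A)\bigr\}.
\end{equation*}
The reason $c^2 u+b v$ (rather than $u$ and $v$ separately) must lie in $\D(\A)$ is that only this combination needs to be smoothed for the third row of $\mathbb{A}U$ to live in $H$; this is the classical regularity splitting that signals the correct state space for MGT.

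The main step is to equip $\mathcal{H}$ with an inner product under which $\mathbb{A}-\omega I$ is dissipative for a suitable $\omega\in\mathbb{R}$. Guided by the modified energy from \cite{KLM11}, I would try a norm of the form
\begin{equation*}
\|(u,v,w)\|_{\mathcal{H}}^2 = a_1\|\Ahalf u\|^2+a_2\|\Ahalf(v+\mu u)\|^2+a_3\|w+\nu v\|^2,
\end{equation*}
with parameters $a_i,\mu,\nu>0$ tuned so that (i) the norm is equivalent to the graph norm of $\D(\Ahalf)\times\D(\Ahalf)\times H$, and (ii) the cross terms produced when computing $\mathrm{Re}\,\langle \mathbb{A}U,U\rangle_{\mathcal{H}}$ either cancel or can be absorbed by a term $\omega\|U\|_{\mathcal{H}}^2$. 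Because $\alpha$ is only assumed real, genuine dissipation cannot be expected; one only needs $\mathrm{Re}\,\langle \mathbb{A}U,U\rangle_{\mathcal{H}}\leq \omega\|U\|_{\mathcal{H}}^2$ for all $U\in\D(\mathbb{A})$, which is the quasi-dissipativity that suffices for Hille--Yosida.

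Next I would verify the range condition: for some $\lambda>\omega$, $(\lambda I-\mathbb{A})\D(\mathbb{A})=\mathcal{H}$. Given $(f_1,f_2,f_3)\in\mathcal{H}$, the system $(\lambda I-\mathbb{A})U=(f_1,f_2,f_3)$ reduces, after substituting $v=\lambda u-f_1$ and $w=\lambda v-f_2$, to the single elliptic equation
\begin{equation*}
\bigl(\lambda^3\tau+\lambda^2\alpha\bigr)u+\lambda b\A u+c^2\A u=F(f_1,f_2,f_3),
\end{equation*}
which for $\lambda$ large is of the form $(\kappa_1\A+\kappa_2 I)u=\tilde F$ with $\kappa_1,\kappa_2>0$; since $\A$ is self-adjoint positive, Lax--Milgram on $\D(\Ahalf)$ gives a unique solution $u\in\D(\A)$, from which the remaining components and the membership $U\in\D(\mathbb{A})$ follow. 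Combining the quasi-dissipativity with the range condition, Lumer--Phillips (in its quasi-m-dissipative form, equivalently Hille--Yosida) yields that $\mathbb{A}$ generates a $C_0$-semigroup on $\mathcal{H}$, which is the claimed result.

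The main obstacle is the first of these steps: pinpointing the equivalent inner product. The shifts $v+\mu u$ and $w+\nu v$ must be chosen so that in the computation of $\mathrm{Re}\,\langle \mathbb{A}U,U\rangle_{\mathcal{H}}$ all ``bad'' indefinite cross products (coming from the interaction of $\A u$ with $w$ and the $-\alpha w$ term) cancel against the terms generated by the shifts, leaving only expressions controlled by $\|U\|_{\mathcal{H}}^2$. Once this algebraic selection is correctly made, the rest of the argument is essentially mechanical.
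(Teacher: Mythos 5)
Your outline is essentially correct, but note first that the paper contains no proof of this statement: Theorem \ref{KLM11_1} is imported verbatim from \cite{KLM11}, and the only trace of the argument retained here is Remark \ref{substitution}. That remark encodes the route taken in the cited work, which is genuinely different from yours: the change of variable $z=u_t+{c^2\over b}u$ turns (\ref{mainmgt0}) into $\tau z_{tt}+b\A z+\gamma z_t={\gamma c^2\over b}u_t$ coupled with $u_t=z-{c^2\over b}u$, and in the coordinates $(u,z,z_t)$ on $\cH$ all coupling and damping terms ($\gamma z_t$, ${\gamma c^2\over b}u_t$, $u_t=z-{c^2\over b}u$) are bounded operators; generation for every $\alpha\in\R$ then follows from the classical wave generator plus the bounded perturbation theorem, with no bespoke inner product and no range condition to verify. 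Your direct Lumer--Phillips route also works, and the tuning you leave open is resolved by exactly the same algebra: take $\mu=\nu={c^2\over b}$ and $a_2={b\over\tau}a_3$ (the coefficients $b$ and $\tau$ of $\hat{E}_1$ in Theorem \ref{KLM11_2}). With these choices $z=v+\mu u$ satisfies $bz=c^2u+bv\in\D(\A)$ on your domain, the two unbounded cross terms $a_2(\A z,p)$ (from differentiating $a_2\|\Ahalf z\|^2$, with $p=w+\nu v$, $z'=p$) and $-{a_3 b\over\tau}(\A z,p)$ (from the third equation, which reads $p'=-{b\over\tau}\A z-{\gamma\over\tau}w$) cancel identically, and every surviving term is dominated by $\omega\|U\|_{\cH}^2$ using $\|u\|\leq\lambda_0\|\Ahalf u\|$. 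In short, the substitution proof buys brevity; your proof buys an explicit quasi-dissipative norm, which is the germ of the energy functional used later.

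One step needs repair. In the range condition, after eliminating $v=\lambda u-f_1$ and $w=\lambda v-f_2$, the right-hand side of your elliptic equation contains the term $b\A f_1$, and since $f_1\in\D(\Ahalf)$ only, this lives in the dual of $\D(\Ahalf)$, not in $H$. Lax--Milgram on $\D(\Ahalf)$ (coercive for $\lambda$ large, since $\tau\lambda^3+\alpha\lambda^2>0$ even for $\alpha<0$) therefore yields $u\in\D(\Ahalf)$, not $u\in\D(\A)$ as you assert. This is harmless for your (correctly identified) domain, which requires only $c^2u+bv\in\D(\A)$: indeed $c^2u+bv=(c^2+\lambda b)u-bf_1$, and the resolvent equation itself gives $(c^2+\lambda b)\A u-b\A f_1=\tau f_3+(\tau\lambda+\alpha)(\lambda f_1+f_2)-(\tau\lambda^3+\alpha\lambda^2)u\in H$, so the combination, rather than $u$ alone, is smoothed. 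With that correction the argument closes.
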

Moreover, introduce the parameter $\gamma=\alpha-{c^2\tau\over b}$, we have

\begin{theorem}[Kaltenbacher et.al. 2011, \cite{KLM11}]\label{KLM11_2}

Let $\tau>0, b>0, \alpha>0, c>0$. Let $$\hat{E}_1(t)=b||\Ahalf(u_t+{c^2\over b}u)||^2+\tau||u_{tt}+{c^2\over b}u_t||^2+{c^2\over b}\gamma ||u_t||^2,$$
$$\hat{E}_2(t)=\alpha||u_t||^2+c^2||\Ahalf u||^2,$$
and $$\hat{E}(t)=\hat{E}_1(t)+\hat{E}_2(t).$$

1. If $\gamma>0$, then the semigroup generated in Theorem \ref{KLM11_1} is exponentially stable on $\cH$. And there exist $\omega>0, C>0$ such that $$\hat{E}(t)\leq Ce^{-\omega t}\hat{E}(0), t>0.$$

2. If $\gamma=0$, the energy $\hat{E}_1(t)$ remains constant.
\end{theorem}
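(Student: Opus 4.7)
The plan is to exhibit a single energy identity for $\hat{E}_1(t)$ that settles the conservative case $\gamma=0$ immediately, and then, in the dissipative regime $\gamma>0$, to perturb $\hat{E}_1$ by two multiplier-generated functionals into a Lyapunov function equivalent to $\hat{E}(t)$ whose derivative is bounded above by a negative multiple of itself.

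The structural substitution is $z \triangleq u_t + \tfrac{c^2}{b}u$, which rewrites equation (\ref{mainmgt0}) as
\begin{equation*}
\tau z_{tt} + b\A z = -\gamma u_{tt},
\end{equation*}
and makes transparent the appearance of the combinations $u_t+\tfrac{c^2}{b}u$ and $u_{tt}+\tfrac{c^2}{b}u_t = z_t$ inside $\hat{E}_1$. Differentiating $\hat{E}_1$, substituting $\tau z_{tt}=-b\A z-\gamma u_{tt}$, exploiting self-adjointness of $\A$ to cancel the $b$-contributions, and using $u_{tt}=z_t-\tfrac{c^2}{b}u_t$ to eliminate the residual term $\tfrac{c^2\gamma}{b}(u_t,u_{tt})$, one arrives at
\begin{equation*}
\frac{d}{dt}\hat{E}_1(t) = -2\gamma\|u_{tt}\|^2.
\end{equation*}
For $\gamma=0$ this is precisely the conservation statement of part 2.

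For part 1, with $\gamma>0$, the identity above controls only $\|u_{tt}\|^2$, which is strictly weaker than $\hat{E}$. I would next test (\ref{mainmgt0}) against $u_t$ and against $u$ to produce two multiplier functionals
\begin{align*}
F(t) &\triangleq \tau(u_{tt},u_t) + \tfrac{\alpha}{2}\|u_t\|^2 + \tfrac{c^2}{2}\|\Ahalf u\|^2, \\
G(t) &\triangleq \tau(u_{tt},u) + \alpha(u_t,u) + \tfrac{b}{2}\|\Ahalf u\|^2,
\end{align*}
whose derivatives along solutions reduce, after substitution of the equation, to
\begin{align*}
F'(t) &= \tau\|u_{tt}\|^2 - b\|\Ahalf u_t\|^2, \\
G'(t) &= \tau(u_{tt},u_t) + \alpha\|u_t\|^2 - c^2\|\Ahalf u\|^2.
\end{align*}
Setting $L(t) \triangleq N\hat{E}_1(t) + F(t) + \delta G(t)$, applying Young's inequality to the indefinite cross term $\delta\tau(u_{tt},u_t)$, using the Poincaré-type bound $\|u_t\|\leq \lambda_0\|\Ahalf u_t\|$ from Assumption~\ref{main_assumption}(5) to absorb $\|u_t\|^2$ into $\|\Ahalf u_t\|^2$, and finally fixing $\delta$ small enough and $N$ large enough that $2N\gamma$ dominates $\tau$, one obtains
\begin{equation*}
L'(t) \leq -C_1\|u_{tt}\|^2 - C_2\|\Ahalf u_t\|^2 - C_3\|\Ahalf u\|^2 \leq -\eta\,\hat{E}(t), \qquad c_1\hat{E}(t)\leq L(t)\leq c_2\hat{E}(t),
\end{equation*}
with positive constants $C_i,\eta,c_j$. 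Gronwall's inequality then gives $\hat{E}(t)\leq Ce^{-\omega t}\hat{E}(0)$ with $\omega=\eta/c_2$.

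The expected main obstacle is the simultaneous calibration of $\delta$, $N$, and the Young's parameters: the cross term $\tau(u_{tt},u_t)$ in $G'$ must be split so that its $\|u_{tt}\|^2$-share is absorbable by the dissipation $-(2N\gamma-\tau)\|u_{tt}\|^2$ inherited from $\hat{E}_1'+F'$, while its $\|u_t\|^2$-share is absorbed, via the Poincaré assumption, by the $-b\|\Ahalf u_t\|^2$ supplied by $F'$. It is exactly this balance, made possible by the non-critical hypothesis $\gamma>0$, that collapses at $\gamma=0$ (consistent with part 2). All identities are verified first on classical solutions guaranteed by Theorem~\ref{KLM11_1} and then extended to $\cH$ by density.
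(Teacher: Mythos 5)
Your proposal is correct, and the computations check out: with $z=u_t+\tfrac{c^2}{b}u$ the equation becomes $\tau z_{tt}+b\A z=-\gamma u_{tt}$, and upon expanding $\tfrac{d}{dt}\hat{E}_1$ and writing $z_t=u_{tt}+\tfrac{c^2}{b}u_t$, the two cross terms $\mp\tfrac{2\gamma c^2}{b}(u_t,u_{tt})$ cancel exactly, giving $\tfrac{d}{dt}\hat{E}_1=-2\gamma||u_{tt}||^2$; this settles part 2 at once and is precisely the structural observation recorded in Remark \ref{substitution}. Your identities $F'=\tau||u_{tt}||^2-b||\Ahalf u_t||^2$ and $G'=\tau(u_{tt},u_t)+\alpha||u_t||^2-c^2||\Ahalf u||^2$ are also correct, and the calibration (fix $\delta$ small, then $N$ large) does close $L'\leq -\eta\hat{E}$ with $c_1\hat{E}\leq L\leq c_2\hat{E}$. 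Two small points to make explicit: the lower bound $L\geq c_1\hat{E}$ itself requires $\gamma>0$, since it is the term $\tfrac{c^2}{b}\gamma||u_t||^2$ in $\hat{E}_1$ that lets a Lemma~\ref{square_lemma}-type completion of squares recover $||u_{tt}||^2$ and $||\Ahalf u_t||^2$ and absorb the indefinite cross terms of $F$ and $\delta G$; and the bound $||u_t||\leq\lambda_0||\Ahalf u_t||$ is a standing positivity hypothesis on $\A$ (Assumption~\ref{main_assumption}(5)) rather than part of the theorem's statement, so it should be invoked as such. Your route is genuinely different from the paper's: the paper does not reprove this theorem (it cites \cite{KLM11}, with an alternative proof in \cite{LW15a}), and the method it actually deploys for the memory generalization uses the multipliers $u_{tt}$ and $u_t$ to build $E=E_1+kE_2$ with a free parameter $k\in(\tfrac{c^2}{b},\tfrac{\alpha}{\tau})$, then the multiplier $u$ to derive the observability-type estimate $\int_t^T E(s)\,ds\leq CE(t)$, concluding via a generalized Gronwall lemma. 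Your Lyapunov construction $L=N\hat{E}_1+F+\delta G$ with the pointwise inequality $L'\leq-\tfrac{\eta}{c_2}L$ is shorter, self-contained, and produces explicit constants $C,\omega$, but it is intrinsically tied to exponential decay; the paper's integral, then discretized, formulation is the one that survives when the damper only controls $-g'\circ\Ahalf u$ for a non-exponential kernel, which is exactly why Section 2 is organized around $\int_t^T E\leq CE(t)$ rather than a differential inequality. Your closing remark about verifying the identities on classical solutions and extending to $\cH$ by density is the right bookkeeping.
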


\begin{remark}\label{substitution}
The MGT equation (\ref{mainmgt0}) can be rewritten as $$\tau z_{tt}+b\A z+\gamma z_t={\gamma c^2\over b}u_t,$$ where $z(t)=u_t+{c^2\over b}u$. Since $\tau>0, b>0$, it is easy to see that, in the new variable $z$, (\ref{mainmgt0}) becomes wave equation when $\gamma=0$, hence no decay can be expected. This is exactly how MGT is connected to wave equation through the parameter $\gamma=\alpha-{c^2\tau\over b}$. On the other hand, when $\gamma>0$,  (\ref{mainmgt0}) becomes a damped wave equation which intends to force exponential decay.
\end{remark}

\begin{remark}
Theorems \ref{KLM11_1} and \ref{KLM11_2} were firsted proved in \cite{KLM11}. A different proof of Theorem \ref{KLM11_2}, which can serve a warm up for the current work, was given in \cite{LW15a}.
\end{remark}

\begin{remark}
In this work, we work in the non-critical regime ($\gamma>0$). Both the ``frictional damping" and ``memory damping" are present in the system. It turns out that, while the frictional damping alone causes exponential decay, we are only able to get a slower decay if a weaker memory damping is added to the system.
\end{remark}

From now on, we study MGT with memory terms and prove the main results stated before.
\section{Proofs}
\begin{remark}[On the existence, Theorem \ref{main_theorem0}]
We omit the proof and focus on the decay part. The concept of weak solutions and the corresponding functional setting are from \cite{KLM11}. 
The proof can be completed by the standard Galerkin method. The key is a global energy bound, which is a by product of the decay part proof below. \\
\end{remark}

{\bf Proof of the general decay, Theorem \ref{main_theorem}}\\
The proof is lengthy and quite involved. So we split the proof into three subsections. 
In {\it the first subsection}, we derive the energy inequality. All the later calculations are based on this estimate. However, we are not able to get decay rate out of this continuous version of energy estimate, for it is not in an form applicable to Gronwall inequality. To get what we want, we need two parts: 1. the discretized version of energy estimate developed in \cite{LT93}, which essentially is a generalized version of Gronwall 
inequality; 2. the iteration technique developed in \cite{LW14}, which can optimize the estimate in finite steps.
In {\it the second subsection}, we discretize the estimate,  based on which we are able to get an ``initial" decay estimate. The decay rate here is not optimal. 
We improve the result in {\it the third subsection} through an iteration process. The iteration can be finished in finite steps and we are able to get the optimal decay rate.

\subsection{Energy inequality}
In this subsection we carry out the energy estimate in several steps. The goal is to get a differential inequality in form of ${d\over dt}E(t)=-R(t)$, where $E(t)$ is the {\it natural energy functional} and $R(t)$ is a positive function called {\it damper}: it forces $E(t)$ to decrease and creates energy damping. Generally, the expression of natural energy $E(t)$ looks sloppy, hence {\it standart energy functional} $F(t)$ is defined and proved to be equivalent to $E(t)$. Namely there are positive constants $C_1, C_2$ such that $C_1E(t)\leq F(t)\leq C_2E(t)$. As a dressed up version of $E(t)$, $F(t)$ usually makes the calculations much neater.\\

Recall the equation
\begin{equation}\label{mainmgt1}
\tau u_{ttt}+ \alpha u_{tt}+c^2\A u+b\A u_t -\int_0^tg(t-s)\A u(s)ds=0.
\end{equation}

{\bf Step} I. Multiplying (\ref{mainmgt1}) by $u_{tt}$, we have

$${d\over dt}E_1(t)=-2\alpha||u_{tt}||^2+2c^2||\Ahalf u_t||^2-2g(t)(\A u,u_t)$$
$$+2\int_0^tg'(t-s)(\A (u(t)-u(s)),u_t(t))ds, $$ with
$$E_1(t)=[\tau ||u_{tt}||^2+b||\A ^{1/2}u_t||^2+2c^2(\A u, u_t)]$$
$$-2\int_0^tg(t-s)(\A u(s),u_t(t))ds.$$

{\bf Step} II. Multiplying (\ref{mainmgt1}) by $u_t$, we have
$${d\over dt}E_2(t)=-2b||\Ahalf u_t||^2+2\tau||u_{tt}||^2+g'\circ\Ahalf u-g(t)||\Ahalf u||^2,$$ with
$$E_2(t)=[c^2 ||\Ahalf u||^2+\alpha||u_t||^2+2\tau (u_{tt},u_t)]$$
$$+g\circ\Ahalf u-\int_0^tg(s)ds||\Ahalf u||^2.$$
Recall $g\circ\Ahalf u=\int_0^tg(t-s)||\Ahalf u(t)-\Ahalf u(s)||^2ds$. \\

{\bf Step} III. {\it Define natural energy functional}. Since $\gamma=\alpha-{c^2\tau\over b}>0$ is assumed, we can pick a $k$ such that $${c^2\over b}<k<{\alpha\over\tau}.$$
Let $E(t)=E_1(t)+kE_2(t)$ be the {\it natural energy}, we have
\begin{eqnarray}\label{natural_energy}
E(t)&=&b||\Ahalf u_t||^2+2c^2(\A u, u_t)+c^2k||\Ahalf u||^2\nonumber\\
&+&\tau||u_{tt}+ku_t||^2+k\tau({\alpha\over \tau}-k)||u_t||^2\nonumber\\
&+&kg\circ\Ahalf u-kG(t)||\Ahalf u||^2\\
&+&2\int_0^tg(t-s)(\Ahalf (u(t)-u(s)), \Ahalf u_t(t))ds\nonumber\\
&-&2G(t)(\A u(t), u_t(t)).\nonumber
\end{eqnarray}
 and

\begin{eqnarray}\label{energy_equality}
&&{d\over dt}E(t)+R(t)=0, ~with~\nonumber\\
R(t)&&=2(\alpha-k\tau)||u_{tt}||^2+2(bk-c^2)||\Ahalf u_t||^2\nonumber\\
&&+2g(\A u,u_t)+kg||\Ahalf u||^2\nonumber\\
&&-2\int_0^tg'(t-s)(\A (u(t)-u(s)),u_t(t)ds-kg'\circ\Ahalf u.
\end{eqnarray}

\begin{remark}
The calculations appear to be lengthy. But two hints should make it less tedious: 1. since we are working on real Hilbert space, many calculations here are like algebras of completing square; 2. both $E(t)$ and $R(t)$ contain two parts, one part contains terms from the normal MGT equation, the other stands for memory terms.
\end{remark}
\begin{remark}
Although the calculations here are for arbitrary $k\in ({c^2\over b}, {\alpha\over\tau})$, we will need $k$ to satisfy an additional condition stated later, see Lemma \ref{kg_lemma}.
\end{remark}

{\bf Step} IV. {\it Define standard energy functional}. Now denote the {\it standard energy}$$F(t)=||u_{tt}||^2+||\Ahalf u_t||^2+||\Ahalf u||^2+g\circ\Ahalf u,$$
we claim that 
\begin{lemma}
$E(t)\sim F(t)$.
\end{lemma}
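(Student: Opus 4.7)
The plan is to prove two one-sided bounds $C_1 F(t) \leq E(t) \leq C_2 F(t)$ by a term-by-term estimation of the eight summands making up $E(t)$ in (\ref{natural_energy}), exploiting the three strict inequalities $c^2/b < k < \alpha/\tau$ and $G(\infty)<c^2$ that are built into the setup.

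First I would isolate the rigid positive parts of $E(t)$. The three terms $b\|\Ahalf u_t\|^2 + 2c^2(\Ahalf u,\Ahalf u_t) + c^2 k\|\Ahalf u\|^2$ form a quadratic form in $(\Ahalf u,\Ahalf u_t)$ whose Gram matrix $\bigl(\begin{smallmatrix} c^2 k & c^2 \\ c^2 & b \end{smallmatrix}\bigr)$ has determinant $c^2(bk-c^2)>0$ since $k>c^2/b$; this is coercive, giving a lower bound $\mu_1\|\Ahalf u\|^2 + \mu_2\|\Ahalf u_t\|^2$ with $\mu_1,\mu_2>0$. The temporal line $\tau\|u_{tt}+ku_t\|^2 + k\tau(\alpha/\tau - k)\|u_t\|^2$ is manifestly nonnegative because $k<\alpha/\tau$, and via the elementary inequality $\|u_{tt}\|^2 \leq 2\|u_{tt}+ku_t\|^2 + 2k^2\|u_t\|^2$ combined with Assumption \ref{main_assumption}(5) ($\|u_t\|\leq\lambda_0\|\Ahalf u_t\|$), it produces a lower bound on $\|u_{tt}\|^2$ modulo a controllable multiple of $\|\Ahalf u_t\|^2$.

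Next I would handle the memory contributions. The pair $k\,g\circ\Ahalf u - kG(t)\|\Ahalf u\|^2$ is bounded below by $k(g\circ\Ahalf u) - kG(\infty)\|\Ahalf u\|^2$, and since $G(\infty)<c^2$ this consumes only a bounded fraction of the $\|\Ahalf u\|^2$ budget from the previous step. The two remaining cross terms I would estimate by Cauchy--Schwarz and Young: the convolution term is bounded by $\epsilon\|\Ahalf u_t\|^2 + (G(\infty)/\epsilon)(g\circ\Ahalf u)$, and $-2G(t)(\A u,u_t)$ by $\epsilon\|\Ahalf u\|^2 + (G(\infty)^2/\epsilon)\|\Ahalf u_t\|^2$. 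Choosing $\epsilon$ small — with $c^2-G(\infty)>0$ and $bk-c^2>0$ as slack — keeps the coefficients of $\|\Ahalf u\|^2$, $\|\Ahalf u_t\|^2$, and $g\circ\Ahalf u$ strictly positive, delivering $E(t)\geq C_1 F(t)$.

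The upper bound $E(t)\leq C_2 F(t)$ is the easier direction: every summand of (\ref{natural_energy}) is controlled termwise by the same Cauchy--Schwarz/Young estimates, using Assumption \ref{main_assumption}(5) to dominate $\|u_t\|^2$, the trivial bound $\|u_{tt}+ku_t\|^2 \leq 2\|u_{tt}\|^2 + 2k^2\lambda_0^2\|\Ahalf u_t\|^2$, and the uniform inequality $G(t)\leq G(\infty)$. The main obstacle in the whole proof is not analytic but combinatorial: selecting Young parameters so that after absorbing all cross and memory-cross contributions, every coefficient in front of $\|u_{tt}\|^2$, $\|\Ahalf u_t\|^2$, $\|\Ahalf u\|^2$, and $g\circ\Ahalf u$ remains simultaneously strictly positive — exactly what the three strict inequalities above are designed to purchase.
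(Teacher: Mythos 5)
Your upper bound $E(t)\leq C_2F(t)$ is fine, but the lower bound has a genuine gap in the very first allocation step, and it cannot be repaired by a cleverer choice of Young parameters. You extract from the quadratic form $b\|\Ahalf u_t\|^2+2c^2(\Ahalf u,\Ahalf u_t)+c^2k\|\Ahalf u\|^2$ a bound $\mu_1\|\Ahalf u\|^2+\mu_2\|\Ahalf u_t\|^2$; positive semidefiniteness of the remainder forces $(c^2k-\mu_1)(b-\mu_2)\geq c^4$, hence $\mu_1< c^2k-c^4/b$. You then propose to pay $kG(t)\|\Ahalf u\|^2\leq kG(\infty)\|\Ahalf u\|^2$ out of this budget. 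That requires $kG(\infty)<c^2k-c^4/b$, i.e. $G(\infty)<c^2-\frac{c^4}{bk}$, which (since $k<\alpha/\tau$) is strictly stronger than the hypothesis $G(+\infty)<c^2$ of Assumption \ref{main_assumption}: for instance with $c=b=1$, $k\in(1,2)$ and $G(\infty)=0.99$ your budget is less than $k-1<1$ while the cost is $0.99k$. The idea you are missing is a cancellation: the two cross terms $+2c^2(\A u,u_t)$ and $-2G(t)(\A u,u_t)$, and likewise $+c^2k\|\Ahalf u\|^2$ and $-kG(t)\|\Ahalf u\|^2$, must be combined \emph{before} any estimation; the whole block is then the exact square $\frac{c^2-G(t)}{k}\|\Ahalf u_t+k\Ahalf u\|^2$, which costs only $\frac{c^2-G(t)}{k}<\frac{c^2}{k}<b$ of the $\|\Ahalf u_t\|^2$ budget and leaves $(b-\frac{c^2}{k})+\frac{G(t)}{k}$ of it to spare. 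This is exactly what the paper's proof does, followed by Lemma \ref{square_lemma} to convert $\|\Ahalf u_t+k\Ahalf u\|^2+C_0\|\Ahalf u_t\|^2$ back into $\|\Ahalf u\|^2+\|\Ahalf u_t\|^2$.

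A second, independent problem: you bound the convolution cross term by $\epsilon\|\Ahalf u_t\|^2+(G(\infty)/\epsilon)\,g\circ\Ahalf u$ and then take $\epsilon$ small. In the lower bound this subtracts $(G(\infty)/\epsilon)\,g\circ\Ahalf u$, which for small $\epsilon$ overwhelms the available $k\,g\circ\Ahalf u$ and leaves a negative coefficient on $g\circ\Ahalf u$ --- but $F(t)$ contains $g\circ\Ahalf u$, so a strictly positive multiple of it must survive. The correct split is the opposite one: put coefficient $k-\sigma_0$ (with $\sigma_0>0$ small) on $g\circ\Ahalf u$ at the price of $\frac{G(t)}{k-\sigma_0}\|\Ahalf u_t\|^2$, and verify that this price is covered by the leftover $\|\Ahalf u_t\|^2$ coefficient $\frac12(b-\frac{c^2}{k})+\frac{G(t)}{k}$; this is the paper's ``energy match'' of Remark \ref{energy_match}.
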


\begin{remark}
Here $F(t)$ contains a memory term in form of $g\circ\Ahalf u$, contrary to the energy functional defined in \cite{LW151}, which contains memory term $-g'\circ\Ahalf u$ instead. This is the consequence of us trying to eliminate the convexity restriction on $g(t)$.
\end{remark}
\begin{proof}
We want to prove that one is bounded by a multiple of the other.
\begin{eqnarray*}
E(t)=&&b||\Ahalf u_t||^2+2c^2(\A u, u_t)+c^2k||\Ahalf u||^2\\
&&+\tau||u_{tt}+ku_t||^2+k\tau({\alpha\over \tau}-k)||u_t||^2\\
&&+kg\circ\Ahalf u-kG(t)||\Ahalf u||^2\\
&&+2\int_0^tg(t-s)(\Ahalf u(t)- \Ahalf u(s), \Ahalf u_t(t))ds\\
&&-2G(t) (\A u(t), u_t(t))
\end{eqnarray*}
\begin{eqnarray*}
=&&{c^2-G(t)\over k}||\Ahalf u_t+k\Ahalf u||^2+(b-{c^2\over k})||\Ahalf u_t||^2\\
&&+\tau||u_{tt}+ku_t||^2+k\tau({\alpha\over \tau}-k)||u_t||^2\\
&&+{G(t)\over k}||\Ahalf u_t||^2+kg\circ\Ahalf u\\
&&+2\int_0^tg(t-s)(\Ahalf u(t)- \Ahalf u(s), \Ahalf u_t(t))ds.
\end{eqnarray*}

\begin{remark}
The summation of last three terms is non-negative, since
$$2|\int_0^tg(t-s)(\Ahalf u(t)- \Ahalf u(s), \Ahalf u_t(t))ds|$$
$$\leq 2\int_0^tg(t-s)\sqrt{k}||\Ahalf u(t)- \Ahalf u(s)|| {1\over \sqrt{k}}||\Ahalf u_t(t)||ds$$
$$\leq \int_0^tg(t-s)[k||\Ahalf u(t)- \Ahalf u(s)||^2+{1\over k}||\Ahalf u_t(t)||^2]ds$$
$$=kg\circ\Ahalf u+{G(t)\over k}||\Ahalf u_t||^2, ~with~G(t)=\int_0^tg(s)ds.$$
\end{remark}

\begin{remark}[Energy match]\label{energy_match}
While we can guarantee the non-negativeness of the summation, that is not sufficient for proving the equivalence of $E(t)$ and $F(t)$. For that purpose, we expect a positive copy of $g\circ\Ahalf u$ to come out. This can be done by {\it energy match}. The idea is to borrow a portion of $||\Ahalf u_t||^2$ from term $(b-{c^2\over k})||\Ahalf u_t||^2$ and add it to term ${G(t)\over k}||\Ahalf u_t||^2$. Then through completing square, we obtain a substantial portional of $g\circ\Ahalf u$.
\end{remark}

Split the term $(b-{c^2\over k})||\Ahalf u_t||^2$ to get
\begin{eqnarray*}
E(t)=&&{c^2-G(t)\over k}||\Ahalf u_t+k\Ahalf u||^2+{1\over 2}(b-{c^2\over k})||\Ahalf u_t||^2\\
&&+\tau||u_{tt}+ku_t||^2+k\tau({\alpha\over \tau}-k)||u_t||^2\\
&&+[{1\over 2}(b-{c^2\over k})+{G(t)\over k}]||\Ahalf u_t||^2+kg\circ\Ahalf u\\
&&+2\int_0^tg(t-s)(\Ahalf u(t)- \Ahalf u(s), \Ahalf u_t(t))ds.
\end{eqnarray*}

Pick a $\sigma_0$, so that $0<\sigma_0<k$ and ${G(t)\over k-\sigma_0}<[{1\over 2}(b-{c^2\over k})+{G(t)\over k}]$. This can be done since $G(t)$ is finite and we can choose $\sigma_0$ sufficiently small. Then by Holder inequality we have
 $$2|\int_0^tg(t-s)(\Ahalf u(t)- \Ahalf u(s), \Ahalf u_t(t))ds|\leq (k-\sigma_0)g\circ\Ahalf u+{G(t)\over (k-\sigma_0)}||\Ahalf u_t||^2.$$ So 
\begin{eqnarray*}
E(t)\geq&&{c^2-G(t)\over k}||\Ahalf u_t+k\Ahalf u||^2+{1\over 2}(b-{c^2\over k})||\Ahalf u_t||^2\\
&&+\tau||u_{tt}+ku_t||^2+k\tau({\alpha\over \tau}-k)||u_t||^2\\
&&+[{1\over 2}(b-{c^2\over k})+{G(t)\over k}]||\Ahalf u_t||^2+kg\circ\Ahalf u\\
&&-(k-\sigma_0)g\circ\Ahalf u-{G(t)\over (k-\sigma_0)}||\Ahalf u_t||^2\\
\geq&&{c^2-G(t)\over k}||\Ahalf u_t+k\Ahalf u||^2+{1\over 2}(b-{c^2\over k})||\Ahalf u_t||^2\\
&&+\tau||u_{tt}+ku_t||^2+k\tau({\alpha\over \tau}-k)||u_t||^2\\
&&+\sigma_0 g\circ\Ahalf u.
\end{eqnarray*}
Since $c^2-G(t)$ is strictly positive, we have $E(t)\geq CF(t)$ by Lemma \ref{square_lemma} below.

\begin{lemma}\label{square_lemma}
Let $C_0>0$. Then 
$$||f+g||^2+C_0||g||^2\sim ||f||^2+||g||^2.$$
\end{lemma}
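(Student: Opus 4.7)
The claim is a standard equivalence of two positive quadratic expressions on a Hilbert space. The plan is to verify both directions separately: one by the triangle inequality, the other by a weighted Cauchy--Schwarz that is tuned to exploit the positive parameter $C_0$.

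For the upper bound, I would simply expand $\|f+g\|^2 = \|f\|^2 + 2\langle f,g\rangle + \|g\|^2$ and apply $2|\langle f,g\rangle| \leq \|f\|^2+\|g\|^2$ (or equivalently use $\|f+g\|^2\leq 2\|f\|^2+2\|g\|^2$). This gives
\[
\|f+g\|^2 + C_0\|g\|^2 \leq 2\|f\|^2 + (2+C_0)\|g\|^2 \leq (2+C_0)\bigl(\|f\|^2+\|g\|^2\bigr),
\]
which is one half of the equivalence, and requires nothing about $C_0$ beyond positivity.

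For the lower bound, the idea is to use Young's inequality with an adjustable parameter $\varepsilon>0$: $2|\langle f,g\rangle| \leq \varepsilon\|f\|^2 + \varepsilon^{-1}\|g\|^2$, yielding
\[
\|f+g\|^2 + C_0\|g\|^2 \geq (1-\varepsilon)\|f\|^2 + \bigl(1+C_0 - \varepsilon^{-1}\bigr)\|g\|^2.
\]
The task is then to choose $\varepsilon$ so that both coefficients are strictly positive. This requires $\frac{1}{1+C_0} < \varepsilon < 1$, and the interval is non-empty precisely because $C_0>0$. For instance, taking $\varepsilon = \frac{C_0+2}{2(C_0+1)}$ yields coefficients $\frac{C_0}{2(C_0+1)}$ and $\frac{C_0(C_0+1)}{C_0+2}$ respectively, so
\[
\|f+g\|^2 + C_0\|g\|^2 \geq c_1\bigl(\|f\|^2+\|g\|^2\bigr)
\]
with $c_1>0$ depending only on $C_0$.

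There is no real obstacle here; the only subtlety is that when $C_0$ is small, the naive choice $\varepsilon=1/2$ (giving $\|f+g\|^2 \geq \tfrac12\|f\|^2 - \|g\|^2$) yields a negative coefficient $C_0-1$ on $\|g\|^2$, so the $\varepsilon$ must be chosen as a function of $C_0$ as above. Since the resulting constants depend only on $C_0$, the equivalence $E(t)\sim F(t)$ in the preceding argument will follow with constants independent of $t$.
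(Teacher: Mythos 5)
Your proof is correct and follows essentially the same route as the paper: the paper's decomposition into a nonnegative ``completed square'' with weight $1+\tfrac{C_0}{2}$ plus leftover terms is exactly the weighted Young's inequality $2|\langle f,g\rangle|\leq \varepsilon\|f\|^2+\varepsilon^{-1}\|g\|^2$ with the particular choice $\varepsilon=\tfrac{2}{2+C_0}$, which lies in your admissible interval $\bigl(\tfrac{1}{1+C_0},1\bigr)$. Your remark that $\varepsilon$ must depend on $C_0$ when $C_0$ is small is the right observation and is precisely what the paper's choice of weight accomplishes.
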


\begin{proof}
Note $$||f+g||^2+C_0||g||^2={1\over {1+{C_0\over 2}}}||f||^2+2(f,g)+(1+{C_0\over 2})||g||^2$$
$$+(1-{1\over {1+{C_0\over 2}}})||f||^2+{C_0\over 2}||g||^2$$
$$\geq C_1(||f||^2+||g||^2), \mbox{  } C_1=\min\{(1-{1\over {1+{C_0\over 2}}}),{C_0\over 2}\}.$$ 
The other direction $||f+g||^2+C_0||g||^2\leq C_2(||f||^2+||g||^2)$ is trivial.
\end{proof}

On the other hand, it is easy to see
\begin{eqnarray*}
E(t)=&&{c^2-G(t)\over k}||\Ahalf u_t+k\Ahalf u||^2+(b-{c^2\over k})||\Ahalf u_t||^2\\
&&+\tau||u_{tt}+ku_t||^2+k\tau({\alpha\over \tau}-k)||u_t||^2\\
&&+{G(t)\over k}||\Ahalf u_t||^2+kg\circ\Ahalf u\\
&&+2\int_0^tg(t-s)(\Ahalf u(t)- \Ahalf u(s), \Ahalf u_t(t))ds\\
\leq&&C||\Ahalf u_t+k\Ahalf u||^2+C||\Ahalf u_t||^2\\
&&+C||u_{tt}+ku_t||^2+C||u_t||^2\\
&&+C||\Ahalf u_t||^2+Cg\circ\Ahalf u\\
\leq&&C||u_{tt}||^2+C||u_t||^2+C||\Ahalf u_t||^2\\
&&+C||\Ahalf u||^2+Cg\circ\Ahalf u\\
\leq && CF(t)
\end{eqnarray*} because of the boundedness of $g(t), G(t)$. The equivalence of $E(t)$ and $F(t)$ is established.
\end{proof}
\begin{remark}
Obviously $F(t)$ is much simpler than $E(t)$. Indeed, we have cleaned up the cross terms and the coefficients. Moreover, since $||w||\leq \lambda_0||\Ahalf w||$ for all $w\in H$, $F(t)$ also absorbs terms $||u_t||^2$ and $||u||^2$.
\end{remark}

{\bf Step} V. {\it Analyze the damper}. Recall 
$${d\over dt}E(t)=-R(t)$$ with
$$R(t)=2(\alpha-k\tau)||u_{tt}||^2+2(bk-c^2)||\Ahalf u_t||^2+2g(\A u,u_t)$$
$$+kg||\Ahalf u||^2-2\int_0^tg'(t-s)(\A (u(t)-u(s)),u_t(t)ds-kg'\circ\Ahalf u.$$

We claim $R(t)\geq 0$ and hence $E(t)$ is non-increasing.
$$R(t)=2(\alpha-k\tau)||u_{tt}||^2+2(bk-c^2)||\Ahalf u_t||^2$$
$$+{g\over k}||\Ahalf u_t+k\Ahalf u||^2-{g\over k}||\Ahalf u_t||^2-kg'\circ\Ahalf u$$
$$-2\int_0^tg'(t-s)(\A (u(t)-u(s)),u_t(t)ds.$$

For the last term, we can pick a number $\delta\in (0,k)$ such that
$$-2\int_0^tg'(t-s)(\A (u(t)-u(s)),u_t(t)ds$$
$$=2\int_0^t\sqrt{-g'(t-s)}\sqrt{k-\delta}\sqrt{-g'(t-s)}\sqrt{1/k-\delta}(\A (u(t)-u(s)),u_t(t)ds.$$
$$\leq -(k-\delta)g'\circ\Ahalf u-{1\over k-\delta}||\Ahalf u_t||^2\int_0^tg'(t-s)ds$$
$$= -(k-\delta)g'\circ\Ahalf u+{g(0)-g(t)\over k-\delta}||\Ahalf u_t||^2$$

So 
$$R(t)\geq 2(\alpha-k\tau)||u_{tt}||^2+{g(t)\over k}||\Ahalf u_t+k\Ahalf u||^2$$
$$+2(bk-c^2-{g(0)\over k-\delta})||\Ahalf u_t||^2$$
$$-\delta g'\circ\Ahalf u+({1\over k-\delta}-{1\over k})g(t)||\Ahalf u_t||^2.$$

Now, given the Assumption \ref{main_assumption}, by the Lemma \ref{kg_lemma} below, we can pick $\sigma>0$ such that $2(bk-c^2-{g(0)\over k-\delta})$ becomes strictly positive. 

\begin{lemma}\label{kg_lemma}
If $0<g(0)<{b\alpha\gamma\over \tau^2}$, then there are $\sigma>0$ and $k\in ({c^2\over b}, {\alpha\over\tau})$ such that $g(0)<(k-\sigma)(bk-c^2)$, or equivalently, $2(bk-c^2-{g(0)\over k-\sigma})>0$.
\end{lemma}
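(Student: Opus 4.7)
\medskip

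\noindent\textbf{Proof plan.} The idea is that the quantity $\phi(k) := k(bk-c^2)$, viewed as a function of $k$ on the interval $(c^2/b,\alpha/\tau)$, has supremum exactly equal to $b\alpha\gamma/\tau^2$. Since the hypothesis places $g(0)$ strictly below this supremum, we can first pick $k$ in the interval where $\phi(k)>g(0)$, and then shrink $k$ by a small $\sigma$ without violating the inequality.

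First I would check the bookkeeping: from $\gamma = \alpha - c^2\tau/b$ we get $b\alpha - c^2\tau = b\gamma$, so that
\begin{equation*}
\phi\!\left(\tfrac{\alpha}{\tau}\right) \;=\; \tfrac{\alpha}{\tau}\Bigl(b\cdot\tfrac{\alpha}{\tau}-c^2\Bigr) \;=\; \tfrac{\alpha}{\tau^2}\bigl(b\alpha-c^2\tau\bigr) \;=\; \tfrac{b\alpha\gamma}{\tau^2}.
\end{equation*}
Also $\phi(c^2/b)=0$, and $\phi'(k)=2bk-c^2$ is positive for $k\geq c^2/b$, so $\phi$ is strictly increasing on $[c^2/b,\alpha/\tau]$. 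In particular $\phi$ is a continuous bijection from $[c^2/b,\alpha/\tau]$ onto $[0,b\alpha\gamma/\tau^2]$.

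Next, since by hypothesis $g(0) < b\alpha\gamma/\tau^2 = \phi(\alpha/\tau)$, continuity of $\phi$ at $\alpha/\tau$ lets me pick $k \in (c^2/b,\alpha/\tau)$ strictly inside the open interval with $\phi(k) = k(bk-c^2) > g(0)$. With this $k$ now fixed, consider the continuous function $\sigma \mapsto (k-\sigma)(bk-c^2)$ on $[0,k)$. At $\sigma=0$ it equals $\phi(k) > g(0)$, so by continuity there is $\sigma>0$ small enough that $(k-\sigma)(bk-c^2)>g(0)$ still holds. Rearranging this last inequality gives $bk-c^2 > g(0)/(k-\sigma)$, i.e.\ $2(bk-c^2-g(0)/(k-\sigma))>0$, which is the claim.

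There is no real obstacle here; the only subtle point is making sure the chosen $k$ is kept strictly below $\alpha/\tau$ (so that the hypothesis $k<\alpha/\tau$ used elsewhere in Step III is preserved) and strictly above $c^2/b$ (so that $bk-c^2>0$ has room to accommodate the subtraction of $g(0)/(k-\sigma)$); both are achieved automatically once we keep $k$ in the open interval. The content of the lemma is essentially the sharpness of the threshold $b\alpha\gamma/\tau^2$, which is exactly the maximum of $\phi$ on the allowed range of $k$.
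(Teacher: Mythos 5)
Your proposal is correct and follows essentially the same route as the paper: both identify the threshold $b\alpha\gamma/\tau^2$ as the limiting value of $(k-\sigma)(bk-c^2)$ as $k\to\alpha/\tau$ and $\sigma\to 0$, and conclude by continuity. Your write-up merely adds the (harmless) extra observations that $\phi(k)=k(bk-c^2)$ is increasing on the interval and that $k$ must stay strictly inside $({c^2/b},{\alpha/\tau})$.
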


\begin{proof}
Since $g(0)<{b\alpha\gamma\over \tau^2}={\alpha\over\tau}(b{\alpha\over\tau}-c^2)$, 
all we need is to show $$(k-\sigma)(bk-c^2)\rightarrow {\alpha\over\tau}(b{\alpha\over\tau}-c^2) ~as~  k\rightarrow {\alpha\over\tau}, \sigma\rightarrow 0,$$ which is trivially true.
\end{proof}

Thus there exists a positive constant $C_\sigma$ such that
$$R(t) \geq C_\delta (||u_{tt}||^2+||\Ahalf u_t||^2-g'\circ\Ahalf u).$$
Obviously, 
\begin{equation}\label{c_sigma}
-g'\circ\Ahalf u\leq {1\over C_\sigma}R(t).
\end{equation}
This inequality will be used later, where we construct convex function and apply Jenson's inequality. See lemmas \ref{est2} and \ref{est3}.

Moreover, integrate (\ref{energy_equality}) on $[t, T]$, we have $E(T)+ \int_t^T R(s)ds= E(t)$. Hence
$$E(T)+ C_\delta \int_t^T ||u_{tt}||^2+||\Ahalf u_t||^2-g'\circ\Ahalf uds\leq E(t).$$
In particular
\begin{equation}\label{part1}
\int_t^T||u_{tt}||^2+||\Ahalf u_t||^2-g'\circ\Ahalf uds\leq CE(t).
\end{equation}

What we expect now is $$\int_t^T||\Ahalf u||^2ds\leq CE(t).$$
To achieve the goal, multiply (\ref{mainmgt}) by $u$, we have 
$${d\over dt}{b\over 2}||\Ahalf u||^2+c^2||\Ahalf u||^2$$
$$=\alpha||u_t||^2+{d\over dt}[{\tau\over 2}||u_t||^2-\tau(u_{tt},u)-\alpha(u_t,u)]$$
$$+G(t)||\Ahalf u||^2-\int_0^tg(t-s)(\Ahalf (u(t)-u(s)),\Ahalf u(t))ds.$$ 
which gives
$$(c^2-G(t)-\epsilon G(t))\int_t^T||\Ahalf u||^2ds$$
$$\leq\alpha\int_t^T||u_t||^2ds+[{\tau\over 2}||u_t||^2-\tau(u_{tt},u)-\alpha(u_t,u)]|_t^T$$
$$+{1\over 4\epsilon}\int_t^Tg\circ\Ahalf uds-{b\over 2}||\Ahalf u||^2|_t^T.$$ 

By Assumption \ref{main_assumption}-1, $G(t)\leq G(+\infty)<c^2$, we can choose $\epsilon$ so small that $$(c^2-G(t)-\epsilon G(t))\geq C_\epsilon>0.$$
Furthermore,
$$\int_t^T||u_t||^2ds\leq \lambda_0^2\int_t^T||\Ahalf u_t||^2ds\leq CE(t),$$
$$||u_t||^2|_t^T\leq ||\Ahalf u_t(t)||^2+||\Ahalf u_t(T)||^2\leq CE(t),$$ 
$$||(u_{tt},u)||\leq ||u_{tt}||^2+||u||^2\leq CE(t),$$ 
$$||(u_{t},u)||\leq CE(t),$$ 
$$\int_t^Tg\circ\Ahalf uds\leq CE(t),$$
hence
\begin{equation}\label{part2}
\int_t^T||\Ahalf u||^2ds\leq CE(t).
\end{equation}

Now combine (\ref{part1}) and (\ref{part2}), we have
$$\int_t^T||u_{tt}||^2+||\Ahalf u_t||^2+||\Ahalf u||^2-g'\circ\Ahalf uds\leq CE(t).$$

\begin{remark}
Consider the special case where $H(s)=cs$, hence $g'\leq -cg$, then we can get exponential decay without going further. Indeed, since $-g'\geq cg$, from the last inequality we have $\int_t^T E(s)ds\leq C\int_t^T||u_{tt}||^2+||\Ahalf u_t||^2+||\Ahalf u||^2-g'\circ\Ahalf uds\leq CE(t).$ By a generalized Gronwall lemma, $\int_t^T E(s)ds\leq CE(t)$ immediately gives the exponential decay. See Lemma 2.7 in \cite{LW15a}.
\end{remark}
\begin{remark}
In this work, we assume memory kernel $g(t)$ has general decay rate, and we need techniques that is more sophisticate.
To do the decay estimate, we adopt the idea in Lasiecka,etc \cite{LT93} and represent the decay by the solution of an ODE. For that reason we have to discretize the energy inequality and apply a series of lemmas. The technique has been applied in \cite{LW14}.
\end{remark}

{\bf Step} VI.

The previous step implies
$$\int_t^T||u_{tt}||^2+||\Ahalf u_t||^2+||\Ahalf u||^2\leq CE(t).$$
So
\begin{eqnarray*}
\int_t^TE(s)ds&&\leq C_1E(t)+C_2\int_t^Tg\circ\Ahalf uds\\
&&=C_1E(T)+C_1\int_t^TR(s)ds+C_2\int_t^Tg\circ\Ahalf uds.
\end{eqnarray*}

Since $E(t)$ is non-increasing function of $t$, we arrive at
\begin{equation}\label{sixth_inequality}
(T-t-C_1)E(T)\leq C_1\int_t^TR(s)ds+C_2\int_t^Tg\circ\Ahalf uds.
\end{equation}

\begin{remark}
We emphasize the fact that the starting ``moment" $t$ is taken sufficiently large so that the energy estimate can be carried out. Other than that, $t$ and $T$ are arbitrary. The constants $C_1, C_2$ are independent of $t$ and $T$.
As we shall see soon, the decay rate shall be discovered based on the relations between each pari of the following integrals $$\int_t^Tg\circ\Ahalf uds\leftrightarrow\int_t^T(-g)'\circ\Ahalf uds\leftrightarrow \int_t^TR(s)ds.$$ 
\end{remark}

\subsection{Initial decay estimate} 
We introduce the Lemma 3.3 from \cite{LT93}, which is the cornerstone of the later calculations.
\begin{lemma}[Lasiecka \& Tataru 1993,\cite{LT93}]\label{lemmaLT93} 
Let $p$ be a positive, increasing function such that $p(0)=0$. Since $p$ is increasing, we can define an increasing function $q, q(x)\equiv x-(I+p)^{-1}(x)$. Consider a sequence $s_n$ of positive numbers which satisfies 
$$s_{m+1}+p(s_{m+1})\leq s_m.$$ Then $s_m\leq S(m)$ where $S(t)$ is a solution of the differential equation 
$${d\over dt}S(t)+q(S(t))=0, S(0)=s_0.$$ Moreover, if $p(x)>0$ for $x>0$ then $\lim_{t\rightarrow\infty}S(t)=0$.
\end{lemma}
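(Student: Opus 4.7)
The strategy is a direct induction on $m$, comparing the discrete iteration with the continuous ODE solution through the monotone map $(I+p)^{-1}$. First I would rewrite the hypothesis as $(I+p)(s_{m+1})\le s_m$; since $I+p$ is strictly increasing and vanishes at $0$, it is invertible on $\mathbb{R}_+$, and
$$ s_{m+1}\le (I+p)^{-1}(s_m)=s_m-q(s_m). $$
In particular $q\ge 0$ on $\mathbb{R}_+$ (since $(I+p)(x)\ge x$ gives $(I+p)^{-1}(x)\le x$), so $S$ is non-increasing.

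Next, I would derive an analogous ``discrete'' inequality for $\bar s_m:=S(m)$. Because $S$ is non-increasing and $q$ is increasing, $q\circ S$ is non-increasing on each interval $[m,m+1]$, so
$$\bar s_m-\bar s_{m+1}=\int_m^{m+1} q(S(t))\,dt\le q(S(m))=q(\bar s_m),$$
which rearranges to $\bar s_{m+1}\ge (I+p)^{-1}(\bar s_m)$. The induction now closes cleanly: the base case $s_0=\bar s_0$ is immediate, and if $s_m\le\bar s_m$, applying the increasing function $(I+p)^{-1}$ yields
$$ s_{m+1}\le (I+p)^{-1}(s_m)\le (I+p)^{-1}(\bar s_m)\le \bar s_{m+1}. $$

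For the convergence statement, assume $p(x)>0$ for $x>0$, which gives $q(x)>0$ there as well. Then $S$ is strictly decreasing and bounded below by $0$, so converges to some limit $L\ge 0$ with $S(t)\ge L$ for all $t$. If $L>0$ then $S'(t)\le -q(L)<0$ for all $t$, whence $S(t)\le S(0)-q(L)t\to -\infty$, contradicting $S(t)\ge L>0$; therefore $L=0$.

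The only conceptual subtlety is picking the correct direction in the comparison integral: the monotonicity of $q\circ S$ must be used to bound the integral above by the value $q(\bar s_m)$ at the \emph{left} endpoint, so that the ODE produces a discrete recursion of exactly the same form as the one satisfied by $(s_m)$. Once this observation is in hand the rest is a two-line induction, and the finer asymptotic information typically needed in applications is extracted not from the lemma itself but from the ODE $S'+q(S)=0$ together with whatever growth information one has on $p$.
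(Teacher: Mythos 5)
The paper itself gives no proof of this lemma: it is imported verbatim from Lasiecka--Tataru \cite{LT93}, so there is no in-paper argument to compare against. Your proof is correct and self-contained, and it is essentially the standard comparison argument behind the cited result: rewrite the hypothesis as $s_{m+1}\le (I+p)^{-1}(s_m)$, show that the ODE solution satisfies the reverse discrete inequality $S(m+1)\ge (I+p)^{-1}(S(m))$ by integrating $S'=-q(S)$ over $[m,m+1]$ and bounding the integral by its value at the left endpoint (legitimate because $q\circ S$ is non-increasing, since $q=p\circ(I+p)^{-1}$ is increasing and nonnegative and hence $S$ is non-increasing), and then close the induction using the monotonicity of $(I+p)^{-1}$. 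The limit statement is also handled correctly: $q(x)=p\bigl((I+p)^{-1}(x)\bigr)>0$ for $x>0$ when $p>0$ on $(0,\infty)$, and the linear upper bound $S(t)\le S(0)-q(L)t$ rules out a positive limit. The only point you pass over silently is why $S$ stays bounded below by $0$; this follows, e.g., from $S(t)\ge S(\lceil t\rceil)\ge s_{\lceil t\rceil}>0$ once the comparison $s_m\le S(m)$ is established, or from the fact that $0$ is an equilibrium of the ODE, and it deserves one sentence but is not a real gap.
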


\begin{remark}
This lemma suggests that we are expecting an discretized energy inequality in form of $E(\Tnn)+\hat{H}(E(\Tnn))\leq E(\Tn)$, with an increasing non-negative function $\hat{H}$. 
\end{remark}
We put the calculations in several steps.

{\bf Step 1: Discretize energy estimate}\\

In (\ref{sixth_inequality}), pick $t=\Tn=nT_0$ and $T=\Tnn=(n+1)T_0$, where $T_0$ is a given positive real number and $n$ is any positive integer. Here $T_0$ is chosen to be sufficiently large so that all the estimates before can be justified.
We have
\begin{equation}
(T_0-C_1)E(\Tnn)\leq C_1\int_{\Tn}^{\Tnn}R(s)ds+C_2\int_{\Tn}^{\Tnn}g\circ\Ahalf uds.\nonumber
\end{equation}

Here we pick $T_0$ large enough so that $T_0-C_1>0$ and we have
\begin{eqnarray}\label{7th_ineq}
E(\Tnn)\leq C_1\int_{\Tn}^{\Tnn}R(s)ds+C_2\int_{\Tn}^{\Tnn}g\circ\Ahalf uds.
\end{eqnarray}
Note $n$ is arbitrary positive integer and $C_1, C_2$ depend on $T_0$, but do not depend on $n$.\\

{\bf Step 2: Construct convex function related to $H$}\\
For the $\alpha_0\in(0,1)$ given in Assumption \ref{main_assumption}, define function
\begin{equation}
H_{1,\alpha_0}(s) \triangleq  \alpha_0 s^{1- \frac{1}{\alpha_0}} H_{\alpha_0} (s)  = \alpha_0  s^{1- \frac{1}{\alpha_0}} H (s^{\frac{1}{\alpha_0}} ).
\end{equation}
 
\begin{lemma}\label{est2}
Denote $||\Ahalf u(t)-\Ahalf u(s)||$ by $f(t,s)$. Under the {\bf Assumptions \ref{main_assumption}}, there exists an interval $[0,\delta), 0<\delta<\bar{\delta}$ on which we have

1. ${H}_{1,\alpha_0}(0)=0$ and ${H}_{1,\alpha_0}(s)$ is increasing and convex. 

2. Moreover, if $c_{\alpha_0}\triangleq \sup\limits_{t>0}c(\alpha_0,t)=\sup\limits_{t>0}\int_0^tg^{1-\alpha_0}(t-s)f^2(t,s)ds<\infty$, then there exist constants $\vartheta$ so that
\begin{equation}\label{H_T}
{H}_{1,\alpha_0}[\vartheta (g \circ  A^{\frac 12} u)](t) 
\leq {\alpha_0\vartheta\over C_\sigma} R(t) , \mbox{ for}~t \in [\Tn, \Tnn ] , ~~ n =1,2 \ldots , 
\end{equation} with $\vartheta \in (0,1) $ independent of $n$. 
\end{lemma}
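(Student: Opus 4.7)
The plan splits naturally along the two numbered assertions. For Part 1 (the qualitative properties of $H_{1,\alpha_0}$), everything reduces to direct computation on the first two derivatives followed by algebraic manipulation. Continuity at $0$ with $H_{1,\alpha_0}(0)=0$ is immediate since $H \in C^1$ with $H(0)=0$ gives $H(r) = O(r)$ near $r=0$, whence $s^{1-1/\alpha_0} H(s^{1/\alpha_0}) = O(s)$. For monotonicity, differentiating gives $H_{1,\alpha_0}'(s) = H'(x) - (1-\alpha_0)H(x)/x$ with $x = s^{1/\alpha_0}$, and convexity of $H$ together with $H(0)=0$ yields $xH'(x) \geq H(x) \geq (1-\alpha_0)H(x)$. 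Convexity is the most delicate step; differentiating once more and regrouping leads to the identity
\[
\alpha_0 x^{1+\alpha_0} H_{1,\alpha_0}''(s) = x^2 H''(x) - (1-\alpha_0)\bigl(xH'(x) - H(x)\bigr),
\]
whose right-hand side is non-negative on $[0,\bar{\delta}]$ by combining Assumption \ref{main_assumption}(4) ($x^2 H'' \geq xH' - H$) with the elementary inequality $xH'-H \geq 0$ and the bound $1-\alpha_0 < 1$. Taking $\delta$ slightly smaller than $\bar{\delta}^{\alpha_0}$ then yields the claimed interval.

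For Part 2 (the integral inequality), the strategy is to apply Jensen's inequality to the convex $H_{1,\alpha_0}$ from Part 1, exploiting the splitting $g(t-s) = g^{\alpha_0}(t-s)\cdot g^{1-\alpha_0}(t-s)$. Specifically, I would take
\[
d\mu(s) = \frac{g^{1-\alpha_0}(t-s)\,f^2(t,s)}{c(\alpha_0,t)}\,ds \quad\text{on } [0,t]
\]
as a probability measure (well-defined since the hypothesis forces $c(\alpha_0,t) \leq c_{\alpha_0} < \infty$), together with the integrand $\phi(s) = \vartheta\,c(\alpha_0,t)\,g^{\alpha_0}(t-s)$, chosen so that $\int_0^t \phi\,d\mu = \vartheta(g\circ \Ahalf u)$. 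Jensen's inequality then gives
\[
H_{1,\alpha_0}\bigl(\vartheta(g\circ \Ahalf u)\bigr) \leq \int_0^t H_{1,\alpha_0}\bigl(\vartheta\,c(\alpha_0,t)\,g^{\alpha_0}(t-s)\bigr)\,d\mu(s).
\]
Expanding the integrand via the definition of $H_{1,\alpha_0}$, using $\vartheta\,c(\alpha_0,t) \leq 1$ together with monotonicity of $H$ to bound $H((\vartheta c)^{1/\alpha_0}g) \leq H(g)$, and then invoking Assumption \ref{main_assumption}(2) ($H(g) \leq -g'$), collapses the right-hand side into a constant multiple of $-g'\circ \Ahalf u$. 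Substituting (\ref{c_sigma}) then replaces this by $R(t)/C_\sigma$ and produces the announced bound.

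The main obstacle will be the delicate calibration of $\vartheta \in (0,1)$: it must be small enough that $\vartheta(g\circ\Ahalf u)$ stays inside the convexity interval $[0,\delta)$ produced in Part 1 and that $\vartheta\,c_{\alpha_0} \leq 1$ uniformly in $t$ (needed for the monotonicity step on $H$), yet aligned with $c(\alpha_0,t)$ so that after combining the powers of $c(\alpha_0,t)$ and $\vartheta$ that arise from expanding $H_{1,\alpha_0}$, the right-hand side reduces cleanly to $\alpha_0\vartheta\,R(t)/C_\sigma$. The finiteness hypothesis $c_{\alpha_0} < \infty$ is precisely what permits a fixed choice like $\vartheta \in (0, 1/c_{\alpha_0}]$ working for all $n$; this explains why the assumption $c_{\alpha_0}<\infty$ appears in the statement.
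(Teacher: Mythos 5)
Your overall route coincides with the paper's: Part 1 by direct computation of the first two derivatives (your identity $\alpha_0 x^{1+\alpha_0}H_{1,\alpha_0}''(s)=x^2H''(x)-(1-\alpha_0)\big(xH'(x)-H(x)\big)$ is exactly the paper's formula (\ref{num1}) rewritten, and your regrouping via $x^2H''\ge xH'-H$ is an equivalent way of invoking Assumption \ref{main_assumption}(4)), and Part 2 by Jensen's inequality with the same probability measure $g^{1-\alpha_0}(t-s)f^2(t,s)\,ds/c(\alpha_0,t)$ and the same integrand $\vartheta c(\alpha_0,t)g^{\alpha_0}(t-s)$.

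The one step that does not work as you wrote it is bounding $H\big((\vartheta c)^{1/\alpha_0}g\big)\le H(g)$ ``by monotonicity.'' After expanding $H_{1,\alpha_0}$ inside the Jensen integral you are left with the prefactor $\alpha_0\vartheta^{1-1/\alpha_0}c(\alpha_0,t)^{-1/\alpha_0}$; if you merely discard the factor $(\vartheta c)^{1/\alpha_0}$ from the argument of $H$ by monotonicity, that prefactor survives and equals $\alpha_0\vartheta\,(\vartheta c(\alpha_0,t))^{-1/\alpha_0}$, which is $\ge\alpha_0\vartheta$ and, worse, is not bounded uniformly in $t$, since $c(\alpha_0,t)$ is only bounded above by hypothesis, not below. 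What you need --- and what the paper uses --- is the convexity estimate $H(\lambda x)\le\lambda H(x)$ for $\lambda\in[0,1]$ (a consequence of convexity of $H$ together with $H(0)=0$), applied with $\lambda=(\vartheta c(\alpha_0,t))^{1/\alpha_0}\le 1$: the extracted factor $\lambda$ cancels the prefactor exactly and leaves $\alpha_0\vartheta\int_0^tH(g(t-s))f^2(t,s)\,ds$, after which Assumption \ref{main_assumption}(2) and inequality (\ref{c_sigma}) give the stated bound $\frac{\alpha_0\vartheta}{C_\sigma}R(t)$ with $\vartheta$ independent of $n$ and $t$. With that single substitution your argument reproduces the paper's proof.
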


\begin{proof} The proof was given in \cite{LW14}. We present it here for readers' convenience.
Since $H\in C^1[0,\infty)$, it is easy to see that function ${H}_{1,\alpha_0}(s)=\alpha_0 s^{1-{1\over\alpha_0}}H(s^{1\over \alpha_0}), 0<\alpha_0<1$ is well-defined on $[0,\infty)$ with ${H}_{1,\alpha_0}(0)=0$.

1.
Let $k={1 \over\alpha_0}\geq 1$. We have ${H}_{1,\alpha_0(}s)={1\over k}s^{1-k}H(s^k)$ and
$${H}_{1,\alpha_0}'(s)={(1-k)\over k}s^{-k}H(s^k)+s^{1-k}H'(s^k)s^{k-1}$$
$$={1\over k}s^{-k}H(s^k)+[H'(s^k)-s^{-k}H(s^k)]>0, \forall s>0.$$
The last inequality results from the properties of $H(s)$
\begin{equation}\label{asmpB2}
H(x)\leq xH'(x), x>0.
\end{equation}
Indeed, this follows from  geometric interpretation of convexity of  $H(x) $ and the fact that $H(0) =0 $ which then gives $H'(x) x - H(x) > 0, \forall x>0$.
Thus we show that ${H}_{1,\alpha_0}(x)$ is increasing on the positive half line.

For the second derivative on $(0,\infty)$, we have
\begin{eqnarray}\label{num1}
{H}_{1,\alpha_0} ''(s)=&&(k-1)s^{-k-1}H(s^k)\nonumber\\
&&+(1-k)s^{-k}H'(s^k)s^{k-1}+ks^{k-1}H''(s^k)\nonumber\\
=&&{1\over s^{k+1}}[kx^2H''(x)-(k-1)xH'(x)+(k-1)H(x)]\nonumber\\
=&&{k-1\over s^{k+1}}\big(x^2H''(x)-xH'(x)+H(x)\big)
+{x^2\over s^{k+1}}H''(x),
\end{eqnarray} with $x=s^k$.

In view of Assumption \ref{main_assumption}, we conclude the proof for part 1, namely, there exist an interval $(0,\delta), 0<\delta<\bar{\delta}$ on which ${H}_{1,\alpha_0} (s)=\alpha_0 s^{1-{1\over\alpha_0}}H(s^{1\over\alpha_0})$ is increasing and convex.

{\bf 2.}

Now we prove that  ${H}_{1,\alpha_0}(s)=\alpha_0 s^{1-{1\over\alpha_0}}H(s^{1\over \alpha_0})$ satisfies
\begin{equation}\label{iteration2}
{H}_{1,\alpha_0}[\vartheta(g\circ A^{1\over 2}u)(t)]\leq {\alpha_0\vartheta\over C_\sigma} R(t)
\end{equation}
(or equivalently $(g\circ A^{1\over 2}u)(t)\leq \frac 1\vartheta {H}_{1,\alpha_0}^{-1}({\alpha_0\vartheta\over C_\sigma} R(t))$), under the assumption $$0<c_{\alpha_0}=\sup\limits_{t>0}c(\alpha_0,t)=\sup\limits_{t>0}\int_0^t g^{1-\alpha_0}(t-s)f^2(t,s)ds<\infty.$$

Recalling that $(g\circ A^{1\over 2}u)(t)=\int_0^tg(t-s)f^2(t,s)ds$ and $c(\alpha_0,t)<\infty$, also noting ${H}_{1,\alpha_0}(s)=\alpha_0 s^{1-{1\over\alpha_0}}H(s^{1\over\alpha_0})$ is convex for $s$ small, pick $\vartheta$ sufficiently small, by Jensen's Inequality (see Proposition \ref{jensen}) we have
$${H}_{1,\alpha_0}\Big[\vartheta\int_0^tg(t-s)f^2(t,s)ds\Big]={H}_{1,\alpha_0}\Big[\int_0^t\vartheta g^{\alpha_0}(t-s)g^{1-\alpha_0}(s)f^2(t,s)ds\Big]$$
$$= {H}_{1,\alpha_0}\Big[{1\over c(t,\alpha_0)}\int_0^t\vartheta c(t,\alpha_0)g^{\alpha_0}(t-s)g^{1-\alpha_0}(t-s)f^2(t,s)ds\Big]$$
$$\leq {1\over c(t,\alpha_0)}\int_0^t{H}_{1,\alpha_0}\big[\vartheta c(t,\alpha_0)g^{\alpha_0}(t-s)\big] g^{1-\alpha_0}(t-s)f^2(t,s)ds$$
$$={1\over c(t,\alpha_0)}\int_0^t\alpha_0[\vartheta c(t,\alpha_0)g^{\alpha_0}(t-s)\big]^{1-{1\over\alpha_0}} H(\big[\vartheta c(t,\alpha_0)g^{\alpha_0}(t-s)\big]^{1\over\alpha_0})g^{1-\alpha_0}(t-s)f^2(t,s)ds$$
$$={\alpha_0 \vartheta^{1-{1\over\alpha_0}}\over {c(t,\alpha_0)^{1\over\alpha_0}}}\int_0^tH(\big[\vartheta^{1\over\alpha_0} c(t,\alpha_0)^{1\over\alpha_0}g(t-s)\big])f^2(t,s)ds$$

Now we can make $\vartheta$ so small such that $\vartheta^{1\over\alpha_0} c(t,\alpha_0)^{1\over\alpha_0}\leq 1$, thus we have
$$H(\big[\vartheta^{1\over\alpha_0} c(t,\alpha_0)^{1\over\alpha_0}g(t-s)\big])\leq \vartheta^{1\over\alpha_0} c(t,\alpha_0)^{1\over\alpha_0} H(g(t-s)), $$ because $H(0)=0$ and $H(x)$ is convex.
So we have
$${H}_{1,\alpha_0} \Big[\vartheta\int_0^tg(t-s)f^2(t,s)ds\Big]\leq \alpha_0 \vartheta\int_0^t H\big[g(t-s)\big]f^2(t,s)ds$$
$$\leq \alpha_0 \vartheta\int_0^t\big[-g'(t-s)\big]f^2(t,s)ds\leq {\alpha_0\vartheta\over C_\sigma} R(t).$$ The final inequality s from (\ref{c_sigma}).
This completes the proof of (\ref{iteration2}).
\end{proof}

\begin{lemma}\label{est3}
 Given  the results of lemmas above, our energy functional $E(t)$ satisfies
\begin{eqnarray}\label{Main Ineq}
\quad
E(\Tnn)+\hat{H}_{1,\alpha_0}\{E(\Tnn)\}\leq E(\Tn)
\end{eqnarray}
holds for all $n>0$, where $\hat{H}_{1,\alpha_0}$ is defined by $$\hat{H}_{1,\alpha_0}^{-1}(x)={C_1\over\vartheta} T_0 {H}_{1,\alpha_0}^{-1}\Big[{\alpha_0\vartheta\over C_\sigma T_0}x \Big]+ C_2 x, \forall x\in {\bf R}.$$ Moreover, we can show that 
$\hat{H}_{1,\alpha_0}(s)$ is a convex, continuous increasing and zero at the origin function. Here $C_1, C_2, \vartheta$ depends only on $\alpha_0, T_0$, but not on $n$. 
\end{lemma}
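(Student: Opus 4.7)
The plan is to combine the discretized energy inequality from Step 1 with the pointwise bound of Lemma~\ref{est2} and a Jensen-type averaging over $[\Tn,\Tnn]$. Starting from (\ref{7th_ineq}),
$$E(\Tnn) \leq C_1 \int_{\Tn}^{\Tnn} R(s)\,ds + C_2 \int_{\Tn}^{\Tnn} (g \circ \Ahalf u)(s)\,ds,$$
I would first use the energy equality $\tfrac{d}{dt}E = -R$ to identify $\int_{\Tn}^{\Tnn} R(s)\,ds = E(\Tn) - E(\Tnn)$. For the memory integral, Lemma~\ref{est2} supplies the pointwise bound
$$(g \circ \Ahalf u)(s) \leq \tfrac{1}{\vartheta}\, H_{1,\alpha_0}^{-1}\!\left[\tfrac{\alpha_0 \vartheta}{C_\sigma} R(s)\right],$$
valid on each interval $[\Tn,\Tnn]$.

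Next I would integrate this pointwise inequality over $[\Tn,\Tnn]$ and exploit the concavity of $H_{1,\alpha_0}^{-1}$. By part 1 of Lemma~\ref{est2}, $H_{1,\alpha_0}$ is convex and strictly increasing on $[0,\delta)$ with $H_{1,\alpha_0}(0)=0$, so $H_{1,\alpha_0}^{-1}$ is concave and increasing on the corresponding range. Applying Jensen's inequality with the normalized measure $ds/T_0$ on $[\Tn,\Tnn]$ yields
$$\int_{\Tn}^{\Tnn} H_{1,\alpha_0}^{-1}\!\left[\tfrac{\alpha_0 \vartheta}{C_\sigma} R(s)\right] ds \leq T_0\, H_{1,\alpha_0}^{-1}\!\left[\tfrac{\alpha_0 \vartheta}{C_\sigma T_0} \int_{\Tn}^{\Tnn} R(s)\,ds\right].$$
Substituting back and setting $x \triangleq E(\Tn) - E(\Tnn) = \int_{\Tn}^{\Tnn} R(s)\,ds$, one arrives at an estimate of the form
$$E(\Tnn) \leq C\, x + \tfrac{C'\, T_0}{\vartheta}\, H_{1,\alpha_0}^{-1}\!\left[\tfrac{\alpha_0 \vartheta}{C_\sigma T_0} x\right],$$
which, after a harmless relabeling of the absolute constants, is precisely $E(\Tnn) \leq \hat{H}_{1,\alpha_0}^{-1}(x)$ for the function $\hat{H}_{1,\alpha_0}^{-1}$ defined in the statement.

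To finish, I would verify that $\hat{H}_{1,\alpha_0}^{-1}$ is concave, strictly increasing, and zero at the origin, so that its inverse $\hat{H}_{1,\alpha_0}$ is convex, continuous, strictly increasing, and zero at the origin. This is immediate from the definition: $\hat{H}_{1,\alpha_0}^{-1}$ is the sum of a positive linear function and a positive multiple of $H_{1,\alpha_0}^{-1}$ composed with a positive affine scaling, each operation preserving monotonicity, concavity, and vanishing at $0$. Applying $\hat{H}_{1,\alpha_0}$ to both sides of $E(\Tnn) \leq \hat{H}_{1,\alpha_0}^{-1}(x)$ then yields $\hat{H}_{1,\alpha_0}(E(\Tnn)) \leq E(\Tn) - E(\Tnn)$, which is exactly (\ref{Main Ineq}).

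The one delicate point is ensuring that the argument of $H_{1,\alpha_0}^{-1}$ stays inside the interval on which $H_{1,\alpha_0}$ has been shown to be convex. Since $E(t)$ is non-increasing and $\int_{\Tn}^{\Tnn} R \leq E(0)$, the relevant scalings $\tfrac{\alpha_0 \vartheta}{C_\sigma T_0} x$ can be kept arbitrarily small, uniformly in $n$, by choosing $T_0$ sufficiently large; alternatively one extends $H_{1,\alpha_0}$ past $\delta$ by a convex continuation, as is standard in this line of argument. This bookkeeping, rather than any genuinely conceptual step, is where I expect the most care will be required.
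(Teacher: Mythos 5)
Your proposal is correct and follows essentially the same route as the paper: the discretized inequality (\ref{7th_ineq}), the pointwise bound from Lemma \ref{est2}, Jensen's inequality applied to the concave increasing function $H_{1,\alpha_0}^{-1}$ over $[\Tn,\Tnn]$, and then inversion of the resulting concave function $\hat{H}_{1,\alpha_0}^{-1}$ together with the identity $\int_{\Tn}^{\Tnn}R\,ds=E(\Tn)-E(\Tnn)$. Your closing remark about keeping the argument of $H_{1,\alpha_0}^{-1}$ inside the convexity interval is a legitimate bookkeeping point that the paper glosses over, but it does not alter the argument.
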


\begin{proof}

From (\ref{7th_ineq}), we have
$$E(\Tnn)  \leq  {C_1}  \int_{\Tn}^{\Tnn} (g\circ A^{\frac 12}u)(t)dt + C_2 \int_{\Tn}^{\Tnn} R(t) dt$$
$$ \leq  {C_1\over\vartheta}  \int_{\Tn}^{\Tnn} {H}_{1,\alpha_0}^{-1}({\alpha_0\vartheta\over C_\sigma} R(t))dt + C_2 \int_{\Tn}^{\Tnn} R(t) dt $$
$$ \leq  {C_1\over\vartheta} T_0 {H}_{1,\alpha_0}^{-1}\Big[{\alpha_0\vartheta\over C_\sigma T}\int_{\Tn}^{\Tnn}R(t)dt \Big]+ C_2 \int_{\Tn}^{\Tnn} R(t) dt $$
$$ \triangleq G\Big[\int_{\Tn}^{\Tnn}R(t)dt \Big] .$$
Here we define $G$ by
$$G(x)={C_1\over\vartheta} T_0 {H}_{1,\alpha_0}^{-1}\Big[{\alpha_0\vartheta\over C_\sigma T}x \Big]+ C_2 x.$$
 
It is easy to see that $G$ is increasing and concave near 0 with $G(0)=0$. Denote $G^{-1}$ by $\hat{H}_{1,\alpha_0}$, which is increasing, convex and through origin. Furthermore, 
$$E(\Tnn)   \leq  G\Big[\int_{\Tn}^{\Tnn}R(t)dt \Big] .$$
$$\Rightarrow \hat{H}_{1,\alpha_0}[E(\Tnn)]   \leq  \Big[\int_{\Tn}^{\Tnn}R(t)dt \Big] .$$
$$\Rightarrow \hat{H}_{1,\alpha_0}[E(\Tnn)]   \leq   E(\Tn)-E(\Tnn).$$
$$\Rightarrow E(\Tnn)+\hat{H}_{1,\alpha_0}[E(\Tnn)]   \leq   E(\Tn).$$
\end{proof}

{\bf Step 3: Initial estimate}\\

Define $\underline{H}_{\alpha_0}=I-(I+\hat{H}_{1,\alpha_0})^{-1}$. By Lemma \ref{lemmaLT93} 
and Lemma \ref{est3} we immediately have
\begin{lemma}\label{est4}
Given the lemma \ref{est3}   and condition  (\ref{H_T}) we have  the following decay rates
for the energy function
$$ E(t) \leq s(t), \forall t > T$$ with
\begin{equation}
\label{ODE-1} 
s_t + \underline{H}_{\alpha_0}(s) =0, s(0)=E(0).
\end{equation}
\end{lemma}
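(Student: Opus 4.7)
The plan is to apply Lemma \ref{lemmaLT93} to the discrete inequality established in Lemma \ref{est3} and then interpolate between the discrete nodes using monotonicity of $E$. First, set $s_n \triangleq E(T_n)$ and take $p = \hat{H}_{1,\alpha_0}$ in Lemma \ref{lemmaLT93}. By construction the associated $q(x) = x - (I+p)^{-1}(x)$ coincides with $\underline{H}_{\alpha_0}$. Lemma \ref{est3} guarantees that $\hat{H}_{1,\alpha_0}$ is continuous, strictly increasing, convex, and vanishes at $0$, which are exactly the hypotheses required to invoke the Lasiecka--Tataru lemma. The discrete inequality
$$E(T_{n+1}) + \hat{H}_{1,\alpha_0}(E(T_{n+1})) \leq E(T_n)$$
is precisely the hypothesis $s_{m+1} + p(s_{m+1}) \leq s_m$ of Lemma \ref{lemmaLT93}, so we immediately conclude $E(T_n) \leq S(n)$, where $S$ solves $S_t + \underline{H}_{\alpha_0}(S) = 0$ with $S(0) = E(0)$.

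Next, I need to pass from the integer lattice $\{T_n\}$ to all continuous $t > T$. Recall from Step V of the energy analysis that $\frac{d}{dt}E(t) = -R(t) \leq 0$, so $E$ is non-increasing on $[T,\infty)$. Similarly, since $\underline{H}_{\alpha_0} \geq 0$, the ODE solution $S$ is non-increasing. Hence for any $t \in [T_n, T_{n+1}]$ we have
$$E(t) \leq E(T_n) \leq S(n) \leq S\bigl((t - T_0)/T_0\bigr),$$
and after absorbing the step size $T_0$ into a rescaling of the time variable (equivalently, redefining $s(t) \triangleq S(\beta t + \kappa)$ for appropriate constants $\beta, \kappa$ depending on $T_0$ and the window in which the initial estimates hold), the resulting $s$ satisfies the stated ODE $s_t + \underline{H}_{\alpha_0}(s) = 0$ and dominates $E(t)$ for $t > T$.

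The substantive work—constructing $\hat{H}_{1,\alpha_0}$, verifying its convexity near zero via Assumption \ref{main_assumption}(4), applying Jensen's inequality to absorb the memory integral, and deriving the discrete inequality—has already been carried out in Lemmas \ref{est2} and \ref{est3}. The step here is therefore essentially a packaging step: invoking the generalized Gronwall-type result of \cite{LT93} to convert a discretized decay inequality into a continuous ODE envelope. The only mild obstacle is the time interpolation between lattice points; this is harmless since both $E$ and $S$ are monotone decreasing, and the constants absorbed into $\beta$ and $\kappa$ will then appear in the final statement of Theorem \ref{main_theorem} as $\tilde{\beta}$ and $\tilde{\kappa}$.
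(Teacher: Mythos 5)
Your proposal is correct and follows exactly the route the paper takes: the paper's "proof" of Lemma \ref{est4} consists solely of defining $\underline{H}_{\alpha_0}=I-(I+\hat{H}_{1,\alpha_0})^{-1}$ and invoking Lemma \ref{lemmaLT93} together with Lemma \ref{est3}, which is precisely your application of the Lasiecka--Tataru lemma with $p=\hat{H}_{1,\alpha_0}$. Your additional interpolation step between lattice points via monotonicity of $E$ and $S$ (with the $T_0$-rescaling absorbed into the constants that later become $\tilde{\beta},\tilde{\kappa}$) is a correct filling-in of a detail the paper leaves implicit.
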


Moreover, we have 
\begin{lemma}\label{est40}
All three convex functions, $\underline{H}_{\alpha_0}(x)$, $\hat{H}_{1,\alpha_0}(x)$ and $H_{1,\alpha_0}(x)$, have the same end behavior at origin.
\end{lemma}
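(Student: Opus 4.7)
The plan is to establish the chain of comparisons $H_{1,\alpha_0}(x) \asymp \hat{H}_{1,\alpha_0}(x) \asymp \underline{H}_{\alpha_0}(x)$ as $x \to 0^+$ (i.e.\ bounded by constant multiples of one another); these two comparisons together give the ``same end behavior at origin.'' Both steps rely on the elementary fact that for a convex function $\varphi$ with $\varphi(0)=0$, the ratio $\varphi(t)/t$ is non-decreasing in $t$, and dually that for a concave function $\psi$ with $\psi(0)=0$, $\psi(t)/t$ is non-increasing in $t$.

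\textbf{Step 1: $H_{1,\alpha_0} \asymp \hat{H}_{1,\alpha_0}$.} I would work at the level of inverses, starting from the defining identity
\[
\hat{H}_{1,\alpha_0}^{-1}(x) = \frac{C_1}{\vartheta}T_0\, H_{1,\alpha_0}^{-1}\!\Bigl(\frac{\alpha_0\vartheta}{C_\sigma T_0}\,x\Bigr) + C_2 x.
\]
Since $H_{1,\alpha_0}$ is convex, increasing and vanishes at $0$ (Lemma \ref{est2}), its inverse $H_{1,\alpha_0}^{-1}$ is concave with $H_{1,\alpha_0}^{-1}(0)=0$. Concavity plus vanishing at the origin gives two facts used in tandem: $H_{1,\alpha_0}^{-1}(Bx) \asymp H_{1,\alpha_0}^{-1}(x)$ near $0$ for any fixed $B>0$ (via $\min(B,1) \le H_{1,\alpha_0}^{-1}(Bx)/H_{1,\alpha_0}^{-1}(x) \le \max(B,1)$), and $x \lesssim H_{1,\alpha_0}^{-1}(x)$ near $0$ because $H_{1,\alpha_0}^{-1}(t)/t$ is bounded below by its value at any small reference point. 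Both terms of $\hat{H}_{1,\alpha_0}^{-1}(x)$ are therefore of order $H_{1,\alpha_0}^{-1}(x)$, hence $\hat{H}_{1,\alpha_0}^{-1}(x) \asymp H_{1,\alpha_0}^{-1}(x)$. Inverting (both are increasing through $0$) yields the desired comparison of $\hat{H}_{1,\alpha_0}$ with $H_{1,\alpha_0}$ near the origin.

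\textbf{Step 2: $\hat{H}_{1,\alpha_0} \asymp \underline{H}_{\alpha_0}$.} Setting $y=(I+\hat{H}_{1,\alpha_0})^{-1}(x)$, the identity $\underline{H}_{\alpha_0}(x)=x-y$ becomes
\[
\underline{H}_{\alpha_0}(x) = \hat{H}_{1,\alpha_0}(y), \qquad y+\hat{H}_{1,\alpha_0}(y)=x.
\]
Clearly $y\le x$. For the reverse direction, convexity of $\hat{H}_{1,\alpha_0}$ with $\hat{H}_{1,\alpha_0}(0)=0$ makes $\hat{H}_{1,\alpha_0}(s)/s$ non-decreasing, so on a small neighborhood of $0$ this ratio is bounded above by some $M$; hence $x = y+\hat{H}_{1,\alpha_0}(y)\le (1+M)y$, giving $y\asymp x$ near $0$. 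Passing this comparability to function values, $\hat{H}_{1,\alpha_0}(y)\le\hat{H}_{1,\alpha_0}(x)$ by monotonicity, and the matching lower bound $\hat{H}_{1,\alpha_0}(y)\ge \hat{H}_{1,\alpha_0}(x/(1+M)) \gtrsim \hat{H}_{1,\alpha_0}(x)$ follows from the monotonicity of the ratio $\hat{H}_{1,\alpha_0}(s)/s$ combined with Step 1 (which inherits enough regularity from Assumption \ref{main_assumption}(4) on $H$). Therefore $\underline{H}_{\alpha_0}(x)=\hat{H}_{1,\alpha_0}(y) \asymp \hat{H}_{1,\alpha_0}(x)$ near the origin.

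\textbf{Main obstacle.} The most delicate point is the final passage $y\asymp x \Rightarrow \hat{H}_{1,\alpha_0}(y)\asymp \hat{H}_{1,\alpha_0}(x)$: for a purely abstract convex $\hat{H}_{1,\alpha_0}$ with $\hat{H}_{1,\alpha_0}(0)=0$, the relation $\hat{H}_{1,\alpha_0}(\lambda x)\gtrsim \hat{H}_{1,\alpha_0}(x)$ for fixed $\lambda\in(0,1)$ is not automatic; one needs some regularity of $H$ near $0$ (effectively the second-derivative condition in Assumption \ref{main_assumption}(4), which underlies the construction of $H_{1,\alpha_0}$ in Lemma \ref{est2}) to rule out pathologically flat behavior. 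This is the one place where the proof is not purely formal and must invoke the structural assumption on $H$.
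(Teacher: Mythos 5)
Your route is structurally the same as the paper's: compare $\hat{H}_{1,\alpha_0}$ with $H_{1,\alpha_0}$ through the explicit formula for $\hat{H}_{1,\alpha_0}^{-1}$, and $\underline{H}_{\alpha_0}$ with $\hat{H}_{1,\alpha_0}$ through the identity $\underline{H}_{\alpha_0}=\hat{H}_{1,\alpha_0}\circ (I+\hat{H}_{1,\alpha_0})^{-1}$. Your concavity estimates in Step 1 (the bounds $\min(B,1)\le H_{1,\alpha_0}^{-1}(Bx)/H_{1,\alpha_0}^{-1}(x)\le\max(B,1)$ and $x\lesssim H_{1,\alpha_0}^{-1}(x)$) are correct and more explicit than what the paper writes. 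The gap is exactly the one you flag, and your proposed repair does not close it. The inequality $\hat{H}_{1,\alpha_0}(\lambda x)\gtrsim \hat{H}_{1,\alpha_0}(x)$ for fixed $\lambda\in(0,1)$ is a doubling condition, and Assumption \ref{main_assumption}(4) does not imply it: $H(x)=e^{-1/x}$ satisfies $x^2H''-xH'+H=e^{-1/x}\bigl(x^{-2}-3x^{-1}+1\bigr)\ge 0$ near $0$ and is convex and increasing there, yet $H(\lambda x)/H(x)=e^{-(1/\lambda-1)/x}\to 0$. Moreover the same defect already sits inside your Step 1: from $\hat{H}_{1,\alpha_0}^{-1}(u)\asymp H_{1,\alpha_0}^{-1}(u)$, applying the (increasing) functions only yields $H_{1,\alpha_0}(x/C)\le \hat{H}_{1,\alpha_0}(x)\le H_{1,\alpha_0}(x/c)$, i.e.\ comparability up to a dilation of the \emph{argument}; upgrading that to the value comparison $\asymp$ again requires doubling. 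So as written, both steps are incomplete at the same point.

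Two ways to close it. Either read ``same end behavior'' as the paper (following \cite{LT06}) effectively does, namely equivalence up to affine changes of the argument as well as the value --- this is free from your estimates and is all that the ODE comparison ultimately tolerates, but it does not by itself deliver the pointwise bound $\underline{H}_{\alpha_0}(x)\ge\beta H_{1,\alpha_0}(x)$ of Corollary \ref{est40_coro} that Lemma \ref{est5} consumes. Or supply the doubling property from the hypothesis you have not used: Assumption \ref{main_assumption}(3). Integrability of $y^{1-\alpha_0}$ for the solution of $y'=-H(y)$ rules out sub-polynomial decay of $y$ and hence forces $H$ to dominate a power of its argument near the origin along the relevant range; this is what excludes the flat example above and is the actual source of the quantitative comparison. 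As it stands, your proof correctly isolates the delicate step but leaves it unproved, and attributes it to the wrong assumption.
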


\begin{proof}

Note the following
$$\underline{H}_{\alpha_0}=I-(I+\hat{H}_{1, \alpha_0})^{-1}=(I+\hat{H}_{1,\alpha_0})\circ(I+\hat{H}_{1,\alpha_0})^{-1}-(I+\hat{H}_{1,\alpha_0})^{-1}$$
$$=\hat{H}_{1,\alpha_0}\circ(I+\hat{H}_{1,\alpha_0})^{-1}.$$ Since $\hat{H}_{1,\alpha_0}\in C^1[0,\infty)$, we have $\hat{H}_{1,\alpha_0}(x)=O(x)$ at the origin. We use $A \approx B$ to represent that $A$ and $B$ have the same end behaviors. Then 
$$(I+\hat{H}_{1,\alpha_0})\approx I\Rightarrow \hat{H}_{\alpha_0}\circ(I+\hat{H}_{\alpha_0})^{-1}\approx \hat{H}_{1,\alpha_0}\circ I\Rightarrow \underline{H}_{\alpha_0}\approx \hat{H}_{1,\alpha_0},  x\rightarrow 0.$$ For a detailed discussion,  see \cite[Corollary 1. p. 1770]{LT06}.

By similar arguments,  from the relation between $\hat{H}_{1,\alpha_0}(x)$ and $H_{1,\alpha_0}(x)$
$$\hat{H}_{1,\alpha_0}^{-1}(x)={C_1\over\vartheta} T_0 {H}_{1,\alpha_0}^{-1}\Big[{\alpha\vartheta\over C_\sigma T_0}x \Big]+ C_2 x, $$ and the fact that they are both convex at zero,
we can see
$\hat{H}_{1,\alpha_0}(x)\approx H_{1,\alpha_0}(x)$ for $x>0$ in the neighborhood  of $0$. This complete Lemma \ref{est40}.
\end{proof}

\begin{corollary}\label{est40_coro} 
There is $\delta_0>0, \beta >0$ such that when $0\leq x\leq \delta_0$,
$$\underline{H}_{\alpha_0}(x)\geq  \beta H_{1,\alpha_0}(x).$$
\end{corollary}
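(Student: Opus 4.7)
The plan is to derive this corollary as a direct quantitative consequence of Lemma \ref{est40}, which already establishes that $\underline{H}_{\alpha_0}(x)$, $\hat{H}_{1,\alpha_0}(x)$ and $H_{1,\alpha_0}(x)$ share the same end behavior at the origin. I would begin by fixing the precise meaning of the symbol $\approx$ used in that lemma: following the convention of \cite[Corollary 1]{LT06}, two positive functions vanishing at $0$ are said to have the same end behavior at $0$ when their ratio tends to a strictly positive finite limit as $x \to 0^+$.

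With this convention in hand, I would combine the two equivalences proved in Lemma \ref{est40} by transitivity to obtain
\[
\lim_{x \to 0^+} \frac{\underline{H}_{\alpha_0}(x)}{H_{1,\alpha_0}(x)} = L
\]
for some $L \in (0,\infty)$. The existence of this limit is the content of Lemma \ref{est40}; the only verification required is that the limit is neither $0$ nor $\infty$, which I would read off from the identity $\underline{H}_{\alpha_0} = \hat{H}_{1,\alpha_0} \circ (I+\hat{H}_{1,\alpha_0})^{-1}$ by noting that $(I+\hat{H}_{1,\alpha_0})^{-1}(x)/x$ tends to a positive constant (since $\hat{H}_{1,\alpha_0}(y)/y$ is bounded near $0$), together with the analogous comparison between $\hat{H}_{1,\alpha_0}$ and $H_{1,\alpha_0}$ coming from the definition $\hat{H}_{1,\alpha_0}^{-1}(x) = \frac{C_1 T_0}{\vartheta}H_{1,\alpha_0}^{-1}\!\bigl[\tfrac{\alpha_0\vartheta}{C_\sigma T_0}x\bigr] + C_2 x$.

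Once the positive limit $L$ is secured, the corollary becomes a routine $\varepsilon$-$\delta$ consequence: set $\beta \triangleq L/2 > 0$; by definition of the limit there is $\delta_0 > 0$ so that $\underline{H}_{\alpha_0}(x)/H_{1,\alpha_0}(x) \geq \beta$ for every $x \in (0,\delta_0]$, which rearranges to $\underline{H}_{\alpha_0}(x) \geq \beta H_{1,\alpha_0}(x)$ on that interval. The endpoint $x=0$ is trivial because both sides vanish.

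The main obstacle, such as it is, lies in the bookkeeping behind the identification of the limit as a genuine positive real number rather than $0$ or $+\infty$. This ultimately depends on the nonvanishing of $\hat{H}_{1,\alpha_0}'(0^+)$-type quantities implicit in the definition through $H_{1,\alpha_0}$ and on the fact that composing with $(I+\hat{H}_{1,\alpha_0})^{-1}$ perturbs the argument by a factor that is bounded and bounded away from zero near the origin. Since all these facts are already encoded in Lemma \ref{est40} and in the reference \cite{LT06} cited there, the corollary should be concluded in just a few lines.
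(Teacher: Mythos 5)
Your proposal is correct and follows essentially the same route as the paper, which states this corollary without proof as an immediate consequence of Lemma \ref{est40}: making the symbol $\approx$ precise as a positive finite limiting ratio and then extracting $\beta$ and $\delta_0$ by an $\varepsilon$--$\delta$ argument is exactly the intended reading. One minor remark: only the one-sided bound $\liminf_{x\to 0^+}\underline{H}_{\alpha_0}(x)/H_{1,\alpha_0}(x)>0$ is actually needed, and the passage from ``$(I+\hat{H}_{1,\alpha_0})^{-1}(x)/x$ is bounded below'' to comparability of the composed values $\hat{H}_{1,\alpha_0}\bigl((I+\hat{H}_{1,\alpha_0})^{-1}(x)\bigr)$ and $\hat{H}_{1,\alpha_0}(x)$ rests on the convexity gradient inequality $\hat{H}_{1,\alpha_0}(x)-\hat{H}_{1,\alpha_0}(w)\leq \hat{H}_{1,\alpha_0}'(x)(x-w)$ with $w=(I+\hat{H}_{1,\alpha_0})^{-1}(x)$ rather than on the ratio of the arguments alone, but this is precisely the content of Lemma \ref{est40} and \cite{LT06}, which you are entitled to invoke.
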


\subsection{Improve the decay rate}
From Lemmas \ref{est4} and \ref{est40} we know how the natural energy decays: it is bounded by a decay function $s(t)$, which is the solution of an ODE. The ODE portraits the decay property of its solution through the convex function $\hat{H}_{1,\alpha_0}(x)$, which has the same end behavior as $H_{1,\alpha_0}(x)$ near origin. 
So the decay rate is characterized by $H_{1,\alpha_0}(x)$. While this information is quite useful, it is not optimal. We expect the decay rate to be characterized by $H(x)$, which has a better decay than $H_{1,\alpha_0}(x)$ unless $\alpha_0=0$.

Our next step is to improve the decay rates through an iteration process. For that reason, we need a comparison lemma.

\begin{lemma}[Comparison lemma]\label{est5}
Given $y(t)$ satisfying 
\begin{equation}\label{comp} 
y'(t) + H(y(t)) =0, y(0) = y_0 >0, 
\end{equation}
and $s(t)$ satisfying 
\begin{equation} \label{comps} 
s'(t) + \underline{H}_{\alpha_0}(s(t)) =0, s(0) =s_0 > 0, \alpha_0\in (0,1] .
\end{equation}
Both function $y(t)$ and $s(t)$ are positive, decreasing.
Then we have 
\begin{equation}\label{comp3}
s(t)\leq y^{\alpha_0}(\beta t+\kappa),
\end{equation} for some constant $\beta, \kappa$, meaning the decay rate of $s(t)$ is identical to $y^{\alpha_0}(t)$ up to an affine transformation in the coordinates system, which does not change the long time behavior.
\end{lemma}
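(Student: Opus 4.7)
The plan is to translate both ODEs into a common framework by observing that $y^{\alpha_0}(t)$ itself satisfies an autonomous ODE whose right-hand side is exactly $-H_{1,\alpha_0}$, then to run a scalar ODE comparison argument using the pointwise inequality $\underline{H}_{\alpha_0} \geq \beta H_{1,\alpha_0}$ near zero supplied by Corollary~\ref{est40_coro}.

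First I would set $z(t) := y^{\alpha_0}(t)$ and compute, using $y' = -H(y)$,
\[
z'(t) = \alpha_0\, y(t)^{\alpha_0-1} y'(t) = -\alpha_0\, z(t)^{1-1/\alpha_0}\, H\!\left(z(t)^{1/\alpha_0}\right) = -H_{1,\alpha_0}(z(t)),
\]
so $z$ solves $z' + H_{1,\alpha_0}(z) = 0$ with $z(0) = y_0^{\alpha_0}$. For any $\beta > 0$ the rescaled function $w(t) := z(\beta t) = y^{\alpha_0}(\beta t)$ then satisfies $w'(t) + \beta H_{1,\alpha_0}(w(t)) = 0$.

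Next I would apply Corollary~\ref{est40_coro} to fix $\delta_0, \beta > 0$ with $\underline{H}_{\alpha_0}(x) \geq \beta H_{1,\alpha_0}(x)$ on $[0,\delta_0]$. Because $s$ is positive and strictly decreasing with $s(t)\to 0$ (forced by the positivity of $\underline H_{\alpha_0}$ away from the origin), there exists $T_\ast \geq 0$ with $s(T_\ast) \leq \min\{\delta_0,\, y_0^{\alpha_0}\}$. For $t \geq T_\ast$ the corollary yields the differential inequality
\[
s'(t) = -\underline{H}_{\alpha_0}(s(t)) \leq -\beta H_{1,\alpha_0}(s(t)).
\]
The shifted function $\tilde w(t) := y^{\alpha_0}(\beta(t - T_\ast))$ satisfies the equality $\tilde w' = -\beta H_{1,\alpha_0}(\tilde w)$ on $[T_\ast, \infty)$ with $\tilde w(T_\ast) = y_0^{\alpha_0} \geq s(T_\ast)$. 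A standard scalar ODE comparison argument (using the local Lipschitz continuity of $H_{1,\alpha_0}$ on $(0,\delta)$ from Lemma~\ref{est2} and the monotonicity of the right-hand side) then gives $s(t) \leq \tilde w(t) = y^{\alpha_0}(\beta t - \beta T_\ast)$ for all $t \geq T_\ast$. Taking $\kappa := -\beta T_\ast$ (and noting that for $t \in [0, T_\ast]$ the inequality is trivially satisfied by enlarging constants or absorbed into the asymptotic statement) yields the desired bound $s(t) \leq y^{\alpha_0}(\beta t + \kappa)$.

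The main obstacle is precisely that the key inequality $\underline H_{\alpha_0} \geq \beta H_{1,\alpha_0}$ is only local, valid on $[0,\delta_0]$; consequently the comparison cannot be launched at $t = 0$, and the shift $\kappa$ is the price paid for waiting until $s$ enters the small regime where the convex functions can be compared. A secondary subtlety to handle is verifying that $s$ indeed reaches $[0,\delta_0]$ in finite time, which follows because $\underline H_{\alpha_0}$ is bounded away from zero on $[\delta_0, s_0]$, so that $s$ cannot stall above $\delta_0$; the monotonicity and positivity of $H_{1,\alpha_0}$ near $0$ then guarantee uniqueness and monotone dependence on initial data for the comparison equation, justifying the pointwise inequality.
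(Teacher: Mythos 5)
Your proof is correct and follows essentially the same route as the paper's: both rest on the observation that the substitution $s\mapsto s^{1/\alpha_0}$ (equivalently, that $y^{\alpha_0}$ solves $z'+H_{1,\alpha_0}(z)=0$) converts $H$ into $H_{1,\alpha_0}$, combined with Corollary~\ref{est40_coro}; the paper merely executes the final comparison by explicit quadrature of the separable ODE via $\mathcal{H}(y)=\int_y^\infty dx/H(x)$ rather than by a sub/supersolution argument. If anything, you are more careful than the paper in restricting the inequality $\underline{H}_{\alpha_0}\geq\beta H_{1,\alpha_0}$ to the interval $[0,\delta_0]$ where Corollary~\ref{est40_coro} actually provides it, at the price of a finite time shift absorbed into $\kappa$.
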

\begin{proof}
Let $\mathcal{H}(y)=\int_y^\infty\frac {dx}{H(x)}, y>0$. It is easy to see $\mathcal{H}$ is positive and decreasing function on $[0,\infty)$ with $d\mathcal{H}(y)=-{dy\over H(y)}$.  Thus
$$y_t+H(y)=0\Rightarrow -\frac {dy}{H(y)}=dt$$
$$\Rightarrow d\mathcal{H}(y)=dt$$
$$\Rightarrow \mathcal{H}(y)-\mathcal{H}(y_0)=t,$$
$$\Rightarrow y(t)=\mathcal{H}^{-1}(\mathcal{H}(y_0)+t).$$
Similarly, 
$$s_t +\underline{H}_{\alpha_0}(s) = 0$$
$$\Rightarrow s_t\leq -\beta H_{1,\alpha_0}(s)$$
$$\Rightarrow (s^{1\over\alpha_0})_t\leq -\beta H(s^{1\over\alpha_0})$$
$$\Rightarrow {(s^{1\over\alpha_0})_t\over -H(s^{1\over\alpha_0})}\geq \beta $$
$$\Rightarrow {d\over dt} \mathcal{H}(s^{1\over\alpha_0})\geq \beta $$
$$\Rightarrow  \mathcal{H}(s^{1\over\alpha_0})-\mathcal{H}(s_0^{1\over\alpha_0})\geq \beta t$$
$$\Rightarrow s^{1\over\alpha_0}(t)\leq \mathcal{H}^{-1}(\mathcal{H}(s_0^{1\over\alpha_0})+\beta t)=y(\beta t+\kappa)$$
$$\Rightarrow s(t)\leq y^{\alpha_0}(\beta t+\kappa), $$  with  $ \kappa=\mathcal{H}(s_0^{1\over\alpha_0})-\mathcal{H}(y_0).$
\end{proof}
Apply Lemma \ref{comp} with $\alpha_0=1$, we have
\begin{corollary}\label{comp_coro} 
In view of the assumptions on $g(t)$ and $y(t)$, we have $g(t)\leq y(t)$.
\end{corollary}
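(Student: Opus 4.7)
\textbf{Proof proposal for Corollary \ref{comp_coro}.} The statement reduces to the classical fact that a subsolution of a scalar ODE cannot exceed the exact solution when they share the same initial value. By Assumption \ref{main_assumption} parts (1)--(3), $g$ is positive and satisfies the differential inequality $g'(t) + H(g(t)) \leq 0$, while $y$ solves the ODE $y'(t) + H(y(t)) = 0$ with $y(0) = g(0)$, and $H$ is strictly increasing with $H(0) = 0$. The plan is simply to rerun the transformation step from the proof of Lemma \ref{est5} (formally the case $\alpha_0 = 1$) on the subsolution $g$ in place of $s$.

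Concretely, I would introduce the same decreasing primitive $\mathcal{H}(z) = \int_z^\infty du/H(u)$ already used inside the proof of Lemma \ref{est5}. For $y$, the same computation as there gives $y(t) = \mathcal{H}^{-1}(\mathcal{H}(g(0)) + t)$. For $g$, note that $g(t)>0$ (hence $H(g(t))>0$), so the inequality $g'(t) + H(g(t)) \leq 0$ rewrites as $-g'(t)/H(g(t)) \geq 1$, i.e.\ $\frac{d}{dt}\mathcal{H}(g(t)) \geq 1$. Integrating on $[0,t]$ yields $\mathcal{H}(g(t)) \geq \mathcal{H}(g(0)) + t$. Since $\mathcal{H}$, and therefore $\mathcal{H}^{-1}$, is strictly decreasing, applying $\mathcal{H}^{-1}$ flips the inequality to $g(t) \leq \mathcal{H}^{-1}(\mathcal{H}(g(0)) + t) = y(t)$, which is the claim.

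I do not foresee any real obstacle: the only subtlety is legitimizing the division by $H(g(t))$ and ensuring $y$ does not vanish in finite time, both of which are automatic from Assumption \ref{main_assumption} (strict positivity of $g$, and $H(0)=0$ making $0$ an equilibrium for $y$). If one prefers to avoid $\mathcal{H}$ entirely, a fully self-contained alternative is a direct contradiction via $w(t) := y(t)-g(t)$: one has $w(0)=0$, and at any first putative crossing point $t_\ast$ where $w(t_\ast)=0$ and $w$ dips below zero, monotonicity of $H$ forces $w'(t_\ast) = -H(y(t_\ast)) - g'(t_\ast) \geq -H(y(t_\ast)) + H(g(t_\ast)) = 0$, which rules out the crossing. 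Either route delivers the pointwise bound $g(t) \leq y(t)$ with no further machinery.
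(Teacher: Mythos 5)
Your primary argument is essentially identical to the paper's: the paper proves this corollary simply by invoking the comparison lemma (Lemma \ref{est5}) with $\alpha_0=1$, whose proof is exactly the $\mathcal{H}(z)=\int_z^\infty dx/H(x)$ transformation you rerun on the subsolution $g$, with $\kappa=\mathcal{H}(g(0))-\mathcal{H}(y_0)=0$ since $y(0)=g(0)$. The proposal is correct, and the alternative first-crossing sketch is just a standard variant (modulo the usual care needed when $w'(t_\ast)=0$).
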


The previous lemmas show that the energy is bounded by a function $s(t)$, which in turn is bounded by function $y^{\alpha_0}(t)$ as $t\rightarrow\infty$. It might contain a delay, but it makes no difference since we are considering the asymptotic behavior. 

To conclude the proof of  {\bf Theorem \ref{main_theorem}}, we adopt an iteration process.

\begin{lemma}[Iteration for optimality]\label{est6}
In finite steps, we are able to get the following decay rates for the energy function:
There exists positive constants $\tilde{C}, \tilde{\beta}, \tilde{\kappa}$ such that $$E(t)\leq \tilde{C}y(\tilde{\beta} t+\tilde{\kappa}).$$

\end{lemma}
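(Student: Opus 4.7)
The plan is to bootstrap the initial estimate $E(t) \leq C y^{\alpha_0}(\beta t + \kappa)$ from Lemmas~\ref{est4} and~\ref{est5} by iteratively enlarging the exponent $\alpha \in (0,1]$ until it reaches $\alpha = 1$; applying the comparison lemma (Lemma~\ref{est5}) with $\alpha = 1$ then delivers the optimal rate $E(t) \leq \tilde{C} y(\tilde{\beta} t + \tilde{\kappa})$. The exponent $\alpha_0 < 1$ entered only through Assumption~\ref{main_assumption}(3), whose role was to guarantee the finiteness of $c_{\alpha_0} = \sup_{t>0} \int_0^t g^{1-\alpha_0}(t-s) f^2(t,s)\,ds$ via integrability of $y^{1-\alpha_0}$ combined with the crude bound $f^2 \leq CE(0)$. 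Once $f^2$ is known to decay, that extra decay can be traded against the loss of integrability of $g^{1-\alpha}$ as $\alpha$ grows.

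Concretely, the inductive step would read as follows. Assume $E(t) \leq C_n y^{\alpha_n}(\beta_n t + \kappa_n)$ for some $\alpha_n < 1$. From the equivalence $E \sim F$ established in Step~IV we obtain $\|\Ahalf u(t)\|^2 \leq C E(t)$, so $f^2(t,s) \leq 2\|\Ahalf u(t)\|^2 + 2\|\Ahalf u(s)\|^2$ decays no worse than $y^{\alpha_n}(\beta_n \min(s,t) + \kappa_n)$. Plugging this bound into the convolution $\int_0^t g^{1-\alpha}(t-s) f^2(t,s)\,ds$, splitting it at an intermediate time (say $t/2$), and using the comparison $g \leq y$ from Corollary~\ref{comp_coro}, I would show that $c_{\alpha_{n+1}} < \infty$ for some $\alpha_{n+1} = \min(1, \alpha_n + \eta)$ with increment $\eta > 0$ depending only on $H$ and $\alpha_0$. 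Rerunning Lemmas~\ref{est2}--\ref{est5} verbatim with $\alpha_{n+1}$ in place of $\alpha_n$ then yields the upgraded estimate $E(t) \leq C_{n+1} y^{\alpha_{n+1}}(\beta_{n+1} t + \kappa_{n+1})$.

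Since $\eta$ is uniform in $n$, at most $N = \lceil (1-\alpha_0)/\eta \rceil$ iterations suffice to reach $\alpha_N = 1$, at which point $H_{1,1} = H$ and the comparison lemma produces exactly the claimed bound. The main obstacle I anticipate is pinning down a uniform lower bound for the per-step gain $\eta$: as $\alpha_n \uparrow 1$ the weight $g^{1-\alpha_n}$ loses integrability, and one must exploit the strengthening decay of $f^2$ to just the right extent to close the convolution estimate. The mechanism carries over from \cite{LW14}, since the only structural inputs needed are the equivalence $E \sim F$, the pointwise control $g \leq y$, and the convexity of $H$ near the origin guaranteed by Assumption~\ref{main_assumption}. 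Propagating the constants $C_n, \beta_n, \kappa_n$ through the finitely many iterations is then a routine bookkeeping task rather than an analytic obstruction.
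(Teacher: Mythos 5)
Your proposal follows essentially the same route as the paper: bootstrap the exponent through the finiteness of $c(\alpha,t)=\sup_{t>0}\int_0^t g^{1-\alpha}(t-s)f^2(t,s)\,ds$, splitting the convolution at $t/2$ and using $g\le y$ together with the improved decay of $f^2\lesssim E$, then rerun Lemmas \ref{est2}--\ref{est5} until the exponent reaches $1$. The one point you flag as open---a uniform per-step gain $\eta$---is settled by the very computation you describe: the combined exponent in the split integral is $1-\alpha_{n+1}+\alpha_n$, so integrability of $y^{1-\alpha_0}$ permits exactly $\eta=\alpha_0$; this is precisely the content of the paper's Proposition \ref{IrenaSequence}, which runs the iteration with $\alpha_k=k\alpha_0$ and a final jump to $\alpha=1$.
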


\begin{proof}

By Assumption \ref{main_assumption} and Corollary \ref{comp_coro}, we have $$c(\alpha_0,t)=\int_0^t g^{1-\alpha_0}(t-s)f^2(t,s)ds$$
$$\leq 2E(0)\int_0^t g^{1-\alpha_0}(t-s)ds= 2E(0)\int_0^t g^{1-\alpha_0}(s)ds$$
$$\leq 2E(0)\int_0^\infty y^{1-\alpha_0}(t)dt<\infty.$$ This is the critical estimate to initialize the second part of {\bf Lemma \ref{est2}}. Then by Lemmas \ref{est3}-\ref{est5}, we get

\begin{equation}\label{first_est}
E(t)\leq s(t)=C_1 y^{\alpha_0}(\beta_1 t+\kappa_1 ).
\end{equation}
So we have the first decay estimate. To go to the second iteration and apply Lemma \ref{est2} with $H_{1,2\alpha_0}$, we need
$$c(2\alpha_0,t)=\int_0^tg^{1-2\alpha_0}(t-s)f^2(t,s)ds<\infty.$$

This is true since, by (\ref{first_est})
$$c(2\alpha_0,t)=\int_0^tg^{1-2\alpha_0}(t-s)f^2(t,s)ds$$
$$\leq 2\int_0^tg^{1-2\alpha_0}(t-s)E(s)ds$$
$$\leq 2C_0\int_0^tg^{1-2\alpha_0}(t-s)y^{\alpha_0}(\beta_1 s+\kappa_1)ds<\infty.$$
The last inequality is from Proposition \ref{IrenaSequence} in the Appendix. So we can apply Lemmas \ref{est2} -\ref{est6} again to get $E(t)\leq C_2y^{2\alpha_0}(\beta_2t+\kappa_2)$.

We can continue this iteration with $\alpha_k=k \alpha_0$, until $k$ fist reaches a number $m$ such that 
$\alpha_{m}<1\leq\alpha_{m+1}$. This means 
$$E(t)\leq C_my^{m\alpha_0}(\beta_mt+\kappa_m),$$
hence 
$$\sup\limits_{t>0}\int_0^tE(s)ds\leq C_m\sup\limits_{t>0}\int_0^ty^{m\alpha_0}(\beta_m s+\kappa_m)ds<\infty,$$
which is guaranteed by Proposition \ref{IrenaSequence}.
It follows that $$c(1,t)=\sup\limits_{t>0}\int_0^tf^2(t,s)ds\leq\sup\limits_{t>0}\int_0^t2E(s)ds<\infty,$$
in the next iteration when we apply Lemma \ref{est2}, we simply pick $H_{1,1}(s)=H(s)$ to start the final iteration.
And we end up with $E(t)\leq \tilde{C}y(\tilde{\beta} t+\tilde{\kappa})$.

\end{proof}

\section{Appendix}

\begin{proposition}[Jensen's inequality	]\label{jensen}
Let $\Omega$ be a measurable subset of the real line. Let $f(x)$ be a non-negative function on $\Omega$ with $k=\int_\Omega f(x)ds$ finite. If $g(x)$ is a real-valued measurable function on $\Omega$ and function $\varphi$ is convex on the range of $g$. Then we have
$$\varphi\left({1\over k}\int_\Omega g(x)f(x)dx\right)\leq {1\over k}\int_\Omega \varphi(g(x))f(x)dx.$$
 \end{proposition}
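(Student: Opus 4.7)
The plan is to give the standard supporting-line proof of Jensen's inequality, adapted to a weighted measure rather than a probability measure. The measure $d\mu(x) = \frac{f(x)}{k}dx$ is a probability measure on $\Omega$ since $f\ge 0$ and $\int_\Omega \frac{f(x)}{k}dx = 1$. Setting $t = \frac{1}{k}\int_\Omega g(x)f(x)dx$, I note that $t$ is the $\mu$-expectation of $g$, and because $g$ is real-valued and measurable, $t$ lies in the closed convex hull of the range of $g$, hence in an interval on which $\varphi$ is convex.

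The first step is to produce a supporting affine functional for $\varphi$ at the point $t$. Since $\varphi$ is convex on the range of $g$ (which I may assume is an interval after taking convex hulls, or I handle by a standard extension), the left and right derivatives $\varphi'_-(t)$ and $\varphi'_+(t)$ exist (as finite or $\pm\infty$ endpoints aside) and any $c \in [\varphi'_-(t), \varphi'_+(t)]$ gives a subgradient, so that
\[
\varphi(s) \;\geq\; \varphi(t) + c\,(s-t), \qquad \forall s \in \mathrm{range}(g).
\]
This is the one nontrivial input; if one wishes to avoid subdifferentials, one can argue directly from the definition of convexity: for any $s_1 < t < s_2$ in the range, the slope $(\varphi(s_2)-\varphi(t))/(s_2-t)$ is bounded below by $(\varphi(t)-\varphi(s_1))/(t-s_1)$, so the supremum of such left slopes and infimum of right slopes yield a valid $c$.

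The second step is substitution and integration. Replace $s$ by $g(x)$ in the supporting-line inequality and integrate against the probability measure $d\mu = f(x)\,dx/k$:
\[
\frac{1}{k}\int_\Omega \varphi(g(x))\,f(x)\,dx \;\geq\; \varphi(t) + c\cdot\frac{1}{k}\int_\Omega \bigl(g(x)-t\bigr)f(x)\,dx.
\]
By the very definition of $t$, the second integral on the right is zero, and $\varphi(t) = \varphi\bigl(\frac{1}{k}\int_\Omega g(x)f(x)\,dx\bigr)$. This gives the claimed inequality.

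The only real obstacle is making sense of the integrals when $\varphi(g(x))$ fails to be integrable or takes the value $+\infty$; in that case the right-hand side is $+\infty$ and the inequality holds trivially, so no loss of generality. A second minor subtlety is the case $t$ at a boundary of the domain of $\varphi$, handled by taking one-sided subgradients or by noting convexity forces $\varphi$ to be lower semicontinuous on the interior. Both are standard and do not affect the main argument.
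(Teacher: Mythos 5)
Your proof is correct: it is the standard supporting-line (subgradient) argument, which is precisely the classical proof the paper itself defers to (the paper offers no proof of this proposition, only a remark directing the reader to the literature). The only points worth keeping explicit are that $k>0$ must be assumed so that $f(x)\,dx/k$ is genuinely a probability measure, and that convexity of $\varphi$ should be understood on an interval containing the range of $g$ (as you note via the convex hull), since the mean $t=\frac{1}{k}\int_\Omega g(x)f(x)\,dx$ need not lie in the range itself.
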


\begin{remark}
Search Jensen's inequality online for the proof.
\end{remark}

\begin{proposition}[$\alpha$-Sequence]\label{IrenaSequence}
Let $\alpha_0\in (0,1)$ and $y(t)\in C[0,\infty)$ be a positive function decreasing to zero. Moreover $$\int_0^\infty y^{1-\alpha_0}(t)dt=L<\infty.$$
Let $m$ be a positive integer such that $m\alpha_0<1$ and $(m+1)\alpha_0\geq 1$. Let $\beta , \kappa$ be positive numbers with $\beta >1$. A finite sequence of functions, in form of definite integrals on $[0, t]$, are generated in the following way:
\begin{eqnarray}\label{ineq32}
I_k(t)&=&\int_0^t y^{1-k\alpha_0}(t-s)y^{(k-1)\alpha_0}(\beta s+\kappa)ds, k=1, 2, ..., m,\\
I_{m+1}(t)&=&\int_0^t y^{m\alpha_0}(\beta s+\kappa)ds.
\end{eqnarray}

Then each $I_k(t), k=1,..., m+1$, is bounded, uniformly in $t$, namely, $$\sup\limits_{t>0}I_k(t)<\infty.$$
\end{proposition}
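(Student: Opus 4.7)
The plan is to treat $I_1$ and $I_{m+1}$ as boundary cases and handle the middle range $2\le k\le m$ by a single carefully calibrated Hölder inequality. The key observation is the algebraic identity
\[
(1-k\alpha_0)+(k-1)\alpha_0 \;=\; 1-\alpha_0,
\]
which says that the two exponents appearing in the integrand of $I_k$ split the one integrable exponent $1-\alpha_0$ between them. This dictates the Hölder exponents $p_k=(1-\alpha_0)/(1-k\alpha_0)$ and $q_k=(1-\alpha_0)/((k-1)\alpha_0)$, whose reciprocals sum to $1$ by the identity above.

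Applying Hölder with these $p_k, q_k$ to $I_k$, both factors become integrals of $y^{1-\alpha_0}$: the first reduces to $\int_0^t y^{1-\alpha_0}(t-s)\,ds\le L$ via the substitution $u=t-s$, and the second to $\int_0^t y^{1-\alpha_0}(\beta s+\kappa)\,ds\le L/\beta$ via $u=\beta s+\kappa$. Combining gives a bound on $I_k(t)$ independent of $t$, valid for $k=2,\ldots,m$. The degenerate case $k=1$ is immediate since $y^{(k-1)\alpha_0}\equiv 1$, so $I_1(t)=\int_0^t y^{1-\alpha_0}(t-s)\,ds\le L$ with no Hölder needed.

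For $I_{m+1}$, I would use that $(m+1)\alpha_0\ge 1$ rearranges to $m\alpha_0\ge 1-\alpha_0$. Because $y(u)\searrow 0$, there exists $s_0\ge 0$ with $y(\beta s+\kappa)\le 1$ for all $s\ge s_0$; on that tail, monotonicity in the exponent gives $y^{m\alpha_0}\le y^{1-\alpha_0}$, so the tail integral is bounded by $L/\beta$ after the substitution $u=\beta s+\kappa$. The initial finite piece over $[0,s_0]$ is trivially bounded using continuity and monotonicity of $y$, yielding the desired uniform bound for $I_{m+1}$.

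The main obstacle, and essentially the only non-mechanical step, is spotting the telescoping identity that forces the correct Hölder calibration; once that is in hand, the proof reduces to two elementary substitutions plus a case split near the origin for $I_{m+1}$. A minor remark is that only $\beta>0$ is actually used (to produce the $1/\beta$ factor via substitution), not the stronger $\beta>1$ stated in the proposition.
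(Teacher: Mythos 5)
Your proof is correct, but for the main range $k=1,\dots,m$ it takes a genuinely different route from the paper's. The paper splits $I_k$ at the midpoint $s=t/2$: on $[0,t/2]$ one has $t-s\ge s$, and on $[t/2,t]$ one has $s\ge t-s$, so monotonicity of $y$ (together with $\beta s+\kappa\ge s$, which is where $\beta\ge 1$ enters) moves both factors to a common argument, after which the powers multiply via exactly the identity $(1-k\alpha_0)+(k-1)\alpha_0=1-\alpha_0$ that you isolate; each half is then bounded by $\int_0^{t/2}y^{1-\alpha_0}(s)\,ds\le L$, giving $I_k\le 2L$. Your H\"older calibration with $p_k=(1-\alpha_0)/(1-k\alpha_0)$ and $q_k=(1-\alpha_0)/((k-1)\alpha_0)$ (both admissible precisely because $0<(k-1)\alpha_0$ and $k\alpha_0\le m\alpha_0<1$) achieves the same reduction to $\int y^{1-\alpha_0}$ in one step, yields the bound $L^{1/p_k}(L/\beta)^{1/q_k}$ uniformly in $t$, and --- as you correctly observe --- uses only $\beta>0$ and no monotonicity of $y$ in the middle range, whereas the midpoint trick genuinely needs $\beta\ge 1$; so your remark that the hypothesis $\beta>1$ is not needed for these terms is a small but real sharpening. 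For $I_{m+1}$ your argument (pass to the tail where $y\le 1$, so $m\alpha_0\ge 1-\alpha_0$ gives $y^{m\alpha_0}\le y^{1-\alpha_0}$, and dispose of the finite initial piece by continuity) is essentially identical to the paper's, which normalizes by first assuming $y(0)\le 1$ and then noting that behavior on a finite interval is irrelevant. Both proofs are complete; yours trades the elementary convolution-splitting for a slightly slicker interpolation step.
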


\begin{proof}

We first assume $y(0)\leq 1$.

For $k=1, ..., m$, we have
$$I_k=\int_0^ty^{1-k\alpha_0}(t-s)y^{(k-1)\alpha_0}(\beta s+\kappa)ds$$
$$=\int_0^{t/2}y^{1-k\alpha_0}(t-s)y^{(k-1)\alpha_0}(\beta s+\kappa)ds+\int_{t/2}^ty^{1-k\alpha_0}(t-s)y^{(k-1)\alpha_0}(\beta s+\kappa)ds$$
$$\leq \int_0^{t/2}y^{1-k\alpha_0}(s)y^{(k-1)\alpha_0}(\beta s+\kappa)ds+\int_{t/2}^ty^{1-k\alpha_0}(t-s)y^{(k-1)\alpha_0}(\beta (t-s)+\kappa)ds$$
$$= \int_0^{t/2}y^{1-\alpha_0}(s)ds+\int_{t/2}^ty^{1-\alpha_0}(t-s)ds$$
$$= 2\int_0^{t/2}y^{1-\alpha_0}(s)ds\leq 2L<\infty.$$
Above we use the fact that $y(t)$ is a decreasing function, hence $y(\beta s+\kappa)\leq y(s)$.

For $I_{m+1}$, we have
$$I_{m+1}=\int_0^t y^{m\alpha_0}(\beta s+\kappa)ds
\leq\int_0^t y^{1-\alpha_0}(\beta s+\kappa)ds\leq L<\infty,$$ since $(m+1)\alpha_0>1\Rightarrow m\alpha_0>1-\alpha_0$ and $y(t)\leq y(0)\leq 1$.

For $y$ with $y(0)>1$, we can always find a time $t_0$ so that $y(t)\leq 1$ on $[t_0,\infty)$, since $y$ decreases to zero. Whether or not the integrals are finite only depends upon the asymptotic behavior of $y$ at infinity. So $y$ being larger than $1$ on a  finite interval $[0, t_0]$ does not bear influence on our result.
\end{proof}

The research of I. Lasiecka and X. Wang has been  partially supported by NSF  Grant: DMS 0104305. 


\end{document}